\theoremstyle{plain}
	\newtheorem{thm}{Theorem}[section]
	\newtheorem{lem}[thm]{Lemma}
	\newtheorem{prop}[thm]{Proposition}
	\newtheorem{ques}[thm]{Question}
\theoremstyle{definition}
\theoremstyle{remark}
	\newtheorem{rem}[thm]{Remark}
\newcommand{\fig}[3][width=12cm]{
\begin{figure}[htbp]
	\centering 
	\includegraphics[#1,clip]{#2} 
	\caption{#3} 
	\label{fig:#2}
\end{figure}}
\DeclareMathOperator{\Aut}{Aut}
\DeclareMathOperator{\Inn}{Inn}
\DeclareMathOperator{\Isom}{Isom}
\DeclareMathOperator{\Out}{Out}
\DeclareMathOperator{\vol}{vol}
\newcommand{\cut}{\setminus \! \! \setminus}
\newcommand{\bbZ}{\mathbb{Z}}
\newcommand{\bbR}{\mathbb{R}}
\newcommand{\bbH}{\mathbb{H}}
\newcommand{\bbP}{\mathbb{P}}
\newcommand{\bfi}{\mathbf{i}}
\newcommand{\bfj}{\mathbf{j}}
\newcommand{\bfk}{\mathbf{k}}
\begin{document}

\title[Links in $S^{3} / Q_{8}$ and their lifts]{Links in the spherical 3-manifold obtained from the quaternion group and their lifts}

\author{Ken'ichi Yoshida}
\address{International Institute for Sustainability with Knotted Chiral Meta Matter (WPI-SKCM$^2$), Hiroshima University, 1-3-1 Kagamiyama, Higashi-Hiroshima, Hiroshima 739-8526, Japan}
\email{kncysd@hiroshima-u.ac.jp}

\subjclass[2020]{57K10, 57K32, 57M10}
\keywords{Links in a lens space, Hyperbolic links}
\date{}

\begin{abstract}
We show that there are infinitely many triples of non-isotopic hyperbolic links in the lens space $L(4,1)$ 
such that the three lifts of each triple in $S^{3}$ are isotopic. 
They are obtained as the lifts of links in $S^{3} / Q_{8}$ by double covers, 
where $Q_{8}$ is the quaternion group. 
To construct specific examples, 
we introduce a diagram of a link in $S^{3} / Q_{8}$ obtained by projecting to a square. 
The diagrams of isotopic links are connected by Reidemeister-type moves. 
\end{abstract}

\maketitle

\section{Introduction}
\label{section:intro}

Links in 3-manifolds other than the 3-sphere $S^{3}$ have been investigated using diagrams as knot theory. 
In some cases, diagrams of isotopic links are connected by generalizations of the Reidemeister moves. 
Diagrams often induce invariants such as presentations for the fundamental groups of the complements, polynomial invariants, and skein modules. 
For example, links have been considered in 
the projective 3-space $\bbR \bbP^{3}$ \cite{Drobotukhina90}, 
the lens spaces $L(p,q)$ \cite{BGH08, CMM13}, 
the thickened surfaces $\Sigma \times I$ \cite{Kauffman99}, 
the orientation $I$-bundles over surfaces \cite{Bourgoin08}, 
the 3-torus $T^{3}$ \cite{AEM25, Carrega17, Vuong23}, 
the products of surfaces and the circle $\Sigma \times S^{1}$ \cite{MD09}, 
and more generally, the Seifert fibered spaces \cite{GM16, GM17}.

A link in a 3-manifold $X$ has the lift (i.e. the preimage) in a finite cover $\widetilde{X}$ of $X$. 
Manfredi~\cite{Manfredi14} considered links in lens spaces and their lifts in the 3-sphere $S^{3}$, 
and constructed examples of non-isotopic links in $L(4,1)$ and $L(2q+1,q)$ ($q \geq 2$) having isotopic lifts. 
The lift in $S^{3}$ of a link in a lens space is a freely periodic link in the sense of Hartley \cite{Hartley81}, 
which is a type of symmetric link in $S^{3}$. 
The symmetries of knots (i.e. one-component links) in $S^{3}$ are quite restricted. 
For instance, a prime knot in $S^{3}$ has a unique freely periodic symmetry \cite{BF87, Sakuma86}. 
Moreover, several constraints for freely periodic knots in $S^{3}$ are known \cite{BR23, Chbili03, Hartley81, Nozaki18}. 
More generally, the finite group actions on knots in $S^{3}$ are classified \cite{BRW23}.

A (singly, doubly, or triply) periodic tangle is defined as the lift with respect to the universal covering of a link in the solid torus $S^{1} \times D^{2}$ \cite{SZ25}, the thickened torus $T^{2} \times I$ \cite{GMO07}, or the 3-torus $T^{3}$ \cite{AEM25}, 
and used to model systems of entangled chains such as polymers, crystalline materials, and textiles. 
In relation to this, Kotorii, Mahmoudi, Matsumoto, and the author \cite{KMMY25} showed that 
if two links in $X = S^{1} \times D^{2}$, $T^{2} \times I$, or $T^{3}$ have isotopic lifts in a finite cover of $X$, 
then they are isotopic. 
Similarly, the author \cite{Yoshida25} showed that 
if two links in $\bbR \bbP^{3}$ have isotopic lifts in $S^{3}$, then they are isotopic.

In this paper, we consider links in the 3-manifold $S^{3} / Q_{8}$ and their lifts in covers, 
where $Q_{8} = \{ \pm 1, \pm \bfi, \pm \bfj, \pm \bfk \in \bbH \}$ is the quaternion group. 
The 3-manifold $S^{3} / Q_{8}$ is topologically obtained from a cube by gluing each pair of opposite faces using the counterclockwise $\pi/2$-rotation. 
We obtain a diagram of a link in $S^{3} / Q_{8}$ by projecting this cube to a square. 
The lens space $L(4,1)$ is a double cover of the 3-manifold $S^{3} / Q_{8}$ in three ways. 
The three lifts in $L(4,1)$ of a link in $S^{3} / Q_{8}$ are different in general, 
but they have a common lift in $S^{3}$. 
In Theorem~\ref{thm:main}, we will show that there are infinitely many triples of non-isotopic hyperbolic links in $L(4,1)$ such that the three lifts of each triple in $S^{3}$ are isotopic. 

In Section~\ref{section:diagram}, 
we will introduce diagrams of links in $S^{3} / Q_{8}$ as well as Reidemister-type moves. 
In Section~\ref{section:cover}, 
we will consider lifts of links in $S^{3} / Q_{8}$. 
To prove Theorem~\ref{thm:main}, 
we will show that there are sufficiently complicated hyperbolic links in a 3-manifold 
in Theorem~\ref{thm:many-hyp}. 
We will also describe how to construct diagrams of lifts of links in $S^{3} / Q_{8}$. 
In Section~\ref{section:ex}, 
we will give three examples of lifts of links in $S^{3} / Q_{8}$ using diagrams. 
In the first example, the links consist of fibers of Seifert fibrations, 
and we can reconstruct examples in \cite{Manfredi14}. 
In the second and last examples, the links are hyperbolic. 
In the last example, the three links in $L(4,1)$ have the same number of components.

\section{Diagrams of links in $S^{3} / Q_{8}$}
\label{section:diagram}

The quaternion algebra $\bbH = \{ a + b \bfi + c \bfj + d \bfk \mid a, b, c, d \in \bbR \}$ is 
the skew field with multiplication defined by $\bfi^{2} = \bfj^{2} = \bfk^{2} = \bfi \bfj \bfk = -1$. 
The \emph{quaternion group} $Q_{8}$ consists of the eight elements $\pm 1, \pm \bfi, \pm \bfj, \pm \bfk \in \bbH$ with the multiplication of quaternions. 
We regard the 3-sphere $S^{3}$ as the set of unit quaternions. 
We consider the right action of the group $Q_{8}$ on $S^{3}$ by multiplication. 
The action is free and isometric with respect to the spherical metric on $S^{3}$. 
The quotient space $S^{3} / Q_{8}$ is the simplest one of prism 3-manifolds, 
which are spherical 3-manifolds of dihedral type. 
Then each double cover of $S^{3} / Q_{8}$ is the lens space $L(4,1)$, not $L(4,3)$, 
for the naturally induced orientation, 
as it will be shown in Section~\ref{section:cover}. 

We consider the Dirichlet fundamental domain $C \subset S^{3}$ for $Q_{8}$ centered at $1 \in S^{3}$. 
The domain $C$ is bounded by the perpendicular bisector planes of the six segments from 1 to $ \pm \bfi, \pm \bfj, \pm \bfk$. 
Hence $C$ is combinatorially a cube. 
The images of $C$ by the action of $Q_{8}$ are the facets of a 4-cube (tesseract). 
Consequently, the 3-manifold $S^{3} / Q_{8}$ is obtained from a cube 
by gluing each pair of opposite faces using the counterclockwise $\pi/2$-rotation. 
The three arrows in Figure~\ref{fig: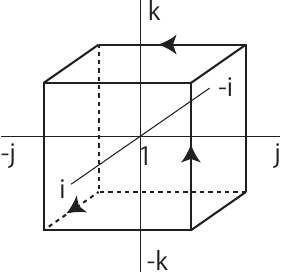} are glued together. 
Note that the action of $\bfi \in Q_{8}$ maps $1, \bfj, \bfk$ to $\bfi, -\bfk, \bfj$, respectively. 

\fig[width=4cm]{cube-domain.pdf}{The fundamental domain $C$}

A \emph{link} $L$ in a 3-manifold $X$ is a collection of finitely many disjoint circles embedded in the interior of $X$. 
Two links $L_{0}$ and $L_{1}$ are \emph{isotopic} 
if there exists an ambient isotopy $F \colon X \times [0,1] \to X$ 
such that $F(\cdot, 0)$ is the identity map on $X$ and $F(L_{0}, 1) = L_{1}$. 
We consider non-oriented smooth (or PL) links in $S^{3} / Q_{8}$. 
Let us construct a diagram of a link $L$ in $S^{3} / Q_{8}$ 
by projecting the cube $C$, regarded as a regular cube in the Euclidean space, to a square. 
We set the link $L$ in general position with respect to the decomposition of $S^{3} / Q_{8}$ into $C$. 
In other words, $L$ is disjoint from the edges and intersects the faces transversely. 
Then $L$ is obtained from $L' \subset C$ by the gluing, 
where $L'$ is a disjoint union of properly embedded arcs and circles. 
Fix an axis for $C$ (in the direction of $\bfi$ for example). 
We obtain a diagram of $L$ from $L'$ by the orthogonal projection of the cube $C$ to a square $S$ 
(in the direction perpendicular to $\bfi$ for example). 
Here we need to assume that $L'$ is in general position with respect to the projection. 
We call a point of $L' \cap \partial C$ (or its image in the square $S$) a \emph{boundary point}. 
Each boundary point is respectively called \emph{horizontal}, \emph{vertical}, and \emph{internal} 
if it is contained in the horizontal edges, the vertical edges, and the interior of $S$. 
To determine the link $L$ by a diagram, we label the boundary points. 
Each pair of horizontal or vertical boundary points which are glued has the same label. 
The heights of horizontal and vertical boundary points in $C$ are determined by the positions of opposite boundary points. 
The internal boundary points in the upper face of $C$ have positive labels 
and those in the lower face have negative labels. 
Each pair of internal boundary points labeled with $+t$ and $-t$ are glued. 

In summary, a diagram of a link $L$ in $S^{3} / Q_{8}$ consists of immersed arcs and circles on $S$ that satisfies the following conditions (called the \emph{diagram conditions}): 
\begin{enumerate}
\item The intersection (crossings) is transversal. 
\item The crossings are finitely many double points. 
\item The crossings are contained in the interior of $S$. 
\item The boundary points are disjoint from the vertices of $S$. 
\item Each crossing has data of the overpass and underpass. 
\item The horizontal (resp. vertical) boundary points are labeled with $1, \dots, n_{h}$ (resp. $1, \dots, n_{v}$). 
\item The internal boundary points (depicted by dots) are labeled with $\pm 1, \dots, \pm n_{i}$. 
\item The internal boundary point labeled with $t \in \{ +1, \dots, +n_{i} \}$ is mapped to that labeled with $-t$ by the counterclockwise $\pi/2$-rotation on $S$. 
\end{enumerate}
The conditions (1)--(4) concern genericity. 
The conditions (5)--(8) concern reconstruction of a link. 
By gluing the corresponding boundary points,
we can determine the link $L$ in $S^{3} / Q_{8}$ from a diagram (see Figure~\ref{fig: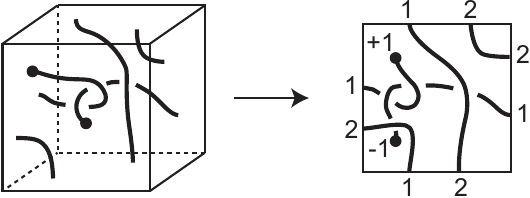}). 

\fig[width=8cm]{square-diagram.pdf}{A diagram of a link in $S^{3} / Q_{8}$}

We consider moves connecting different diagrams of isotopic links in $S^{3} / Q_{8}$. 
Our arguments are similar to those in \cite{CMM13} for a link in a lens space. 
A \emph{diagram isotopy} is an isotopy on the square $S$ 
that keeps the diagram conditions (1)--(8) and possibly permutes the labels. 
The \emph{generalized Reidemeister moves} on a diagram of a link in $S^{3} / Q_{8}$ 
are the moves $R_{1}, \dots, R_{8}$ shown in Figure~\ref{fig: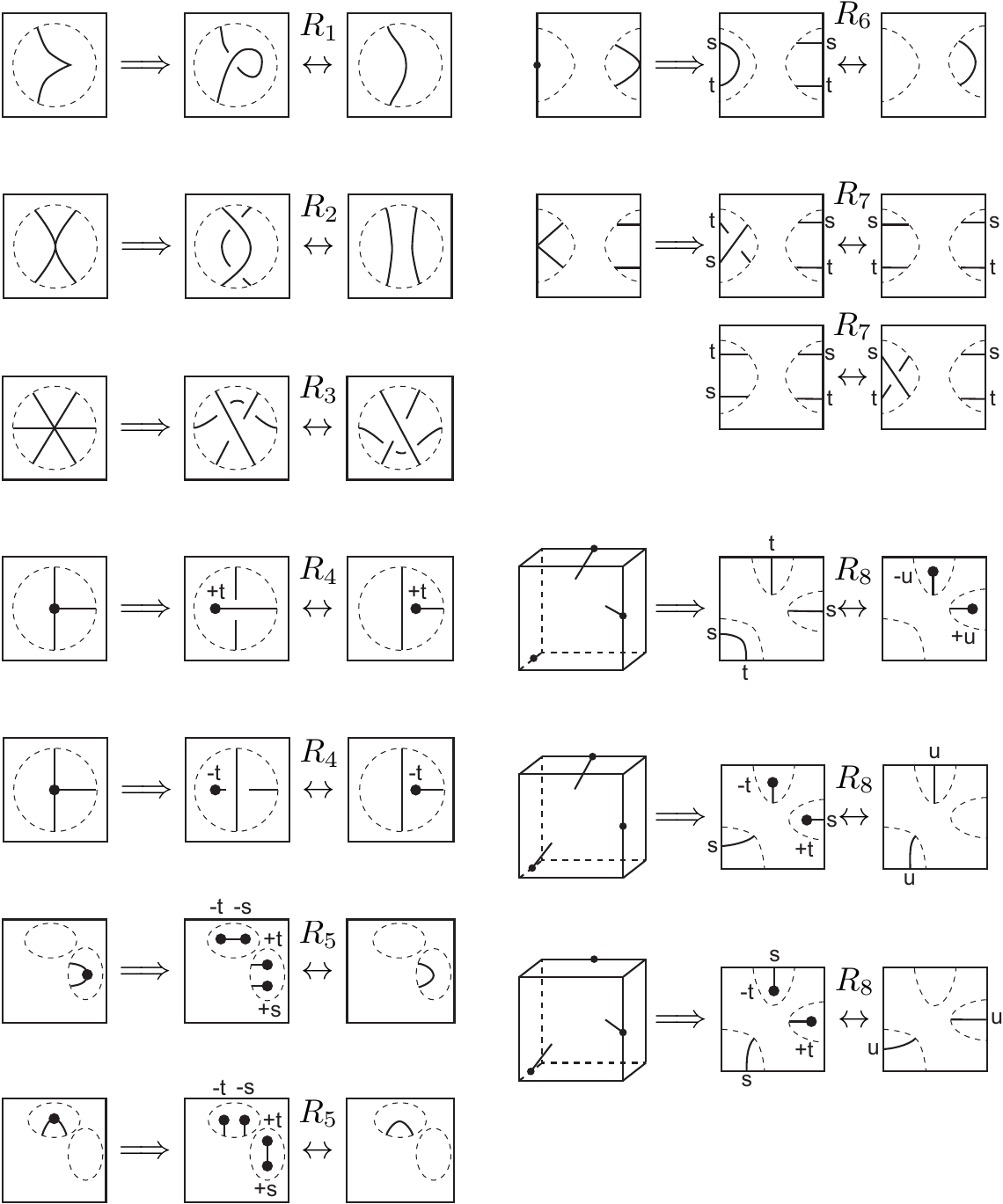}. 
The moves $R_{1}$, $R_{2}$, and $R_{3}$ correspond to the ordinary Reidemeister moves. 
Of course, the crossing in $R_{1}$ may be inverse. 
The moves $R_{1}, \dots, R_{5}$ occur in the interior of $S$. 
The moves $R_{6}$, $R_{7}$, and $R_{8}$ concern edges of $S$, 
and those obtained by rotations of $S$ are also the moves of the same type. 
The labels on an edge in $R_{6}$ may be inverse thanks to $R_{7}$. 
One of the moves $R_{7}$ can be expressed by a composition of the other of $R_{7}$ and $R_{2}$. 
Two of the moves $R_{8}$ can be expressed by compositions of the remaining of $R_{8}$ and $R_{6}$. 
For example, the second $R_{8}$ is a composition of the first $R_{8}$ and $R_{6}$ as shown in Figure~\ref{fig: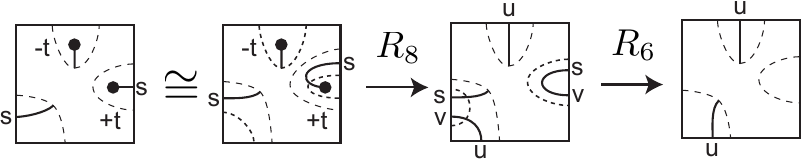}.

\fig[width=12cm]{reidemeister.pdf}{Generalized Reidemeister moves}

\fig[width=12cm]{R6-R8.pdf}{An $R_{8}$ move as a composition of $R_{8}$ and $R_{6}$ moves}

\begin{thm}
\label{thm:move}
Two links $L_{0}$ and $L_{1}$ in $S^{3} / Q_{8}$ are isotopic 
if and only if their diagrams are connected by a finite sequence of diagram isotopies and generalized Reidemeister moves $R_{1}, \dots, R_{8}$. 
\end{thm}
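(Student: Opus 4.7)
The plan is to follow the standard proof of the Reidemeister theorem, adapted to the cubical decomposition of $S^{3}/Q_{8}$ and enriched by an analysis of the boundary behaviour on $\partial C$, as in the treatment of lens space diagrams in \cite{CMM13}.

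I would dispatch the \emph{if} direction first by verifying, move by move, that each $R_{i}$ is realised by a local ambient isotopy of $L$ in $S^{3}/Q_{8}$. The moves $R_{1}, R_{2}, R_{3}$ take place inside a ball disjoint from $\partial C$ and reduce to the classical Reidemeister moves. The moves $R_{4}$ and $R_{5}$ take place in a ball meeting the horizontal faces of $C$, where the gluing that produces the internal boundary points must be respected. The moves $R_{6}, R_{7}, R_{8}$ take place near an edge or near a pair of opposite faces of $C$, and here one uses the $\pi/2$-rotation gluings explicitly.

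For the \emph{only if} direction I would take an ambient isotopy $F \colon S^{3}/Q_{8} \times [0,1] \to S^{3}/Q_{8}$ with $F(L_{0},1)=L_{1}$, set $L_{t} := F(L_{0},t)$, and perturb $F$ so that the family $L_{t}$ is in general position with respect to both the cubical decomposition of $S^{3}/Q_{8}$ and the orthogonal projection $C \to S$. General position can be achieved at all but finitely many times $0 < t_{1} < \cdots < t_{k} < 1$. Between consecutive $t_{j}$ the diagram changes only by diagram isotopy (possibly reindexing labels), so the task reduces to showing that each $t_{j}$ contributes one of the moves $R_{1}, \ldots, R_{8}$.

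The hard part will be the enumeration and classification of these codimension-one failures. Interior-of-$C$ catastrophes are the familiar ones: a cusp, a tangency between two projected strands, and a triple point give $R_{1}, R_{2}, R_{3}$; the remaining interior catastrophes, which concern the stacking order of strands relative to the two horizontal faces of $C$, give $R_{4}$ and $R_{5}$. Boundary catastrophes come in three families corresponding to $R_{6}, R_{7}, R_{8}$: a strand of $L_{t}$ crossing an edge of $C$, a boundary point passing over or under another strand, and a tangency of $L_{t}$ with a face of $C$ that creates or annihilates a pair of boundary points. The subtlety here is the $\pi/2$-rotation identification of opposite faces: any local catastrophe on one face is mirrored by its rotated counterpart on the opposite face, and I would need to check that the combined effect on the diagram is exactly one of the listed moves, up to rotations of $S$ and inversions of labels on an edge. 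The observations already recorded in Section~\ref{section:diagram}, that certain variants of $R_{7}$ and $R_{8}$ are compositions of the remaining moves, show that the given list is already closed under these combinatorial variants, so the finite case analysis closes up and completes the proof.
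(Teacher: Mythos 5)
Your proposal follows essentially the same route as the paper's proof: the easy verification that each move preserves the isotopy class, followed by putting a generic ambient isotopy in general position and matching the finitely many codimension-one failures of genericity (cusp, tangency, triple point, and the boundary/face/edge catastrophes of the cube $C$) with the moves $R_{1},\dots,R_{8}$, with diagram isotopies in between. Your added remark that a catastrophe on one face is mirrored on the opposite face under the $\pi/2$-rotation gluing is exactly the point implicit in the paper's treatment of $R_{5}$, $R_{6}$, and $R_{8}$, so the argument is correct as sketched.
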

\begin{proof}
It is easy to see that each generalized Reidemeister move connects isotopic links. 
Hence a finite sequence of diagram isotopies and generalized Reidemeister moves does not change the isotopy class of the link. 

Conversely, suppose that two links $L_{0}$ and $L_{1}$ in $S^{3} / Q_{8}$ are isotopic. 
There exists an ambient isotopy connecting $L_{0}$ and $L_{1}$ 
in general position for the square $S$. 
Then the intermediate links in the isotopy may violate the genericity condition finitely many times. 
Their projections contain a finite number of 1-codimensional forbidden configurations shown in Figure~\ref{fig:reidemeister.pdf}. 
The forbidden configurations correspond to the moves $R_{1}, \dots, R_{8}$. 
More precisely, violation of genericity induces the moves as follows: 
\begin{itemize}
\item Violation of transversality of the intersection of $L$ and the faces of $C$ induces $R_{5}$ and $R_{6}$. 
\item Violation of immersion induces $R_{1}$. 
\item Violation of the diagram condition (1) induces $R_{2}$ and $R_{4}$. 
\item Violation of the diagram condition (2) induces $R_{3}$. 
\item Violation of the diagram condition (3) induces $R_{7}$. 
\item Violation of the diagram condition (4) and disjointness of $L$ from the edges of $C$ induce  $R_{8}$. 
\end{itemize}
The remaining part of the isotopy is realized by diagram isotopies. 
\end{proof}

\begin{rem}
The diagram condition (8) might be inconvenient 
since it is not truly combinatorial. 
If we want to treat a diagram more combinatorially, 
we need another constraint for the internal boundary points instead of the condition (8). 
For example, 
we may consider a diagram of another type 
in which the boundary points are drawn in the edges, 
in the same manner as a disk diagram of a link in a lens space explained in Section~\ref{section:cover}. 
Then we need other moves for crossings of internal boundary points. 
\end{rem}

We describe self-homeomorphisms of $S^{3} / Q_{8}$. 
For a manifold $X$, let $\Aut(X)$ denote the group consisting of self-homeomorphisms of $X$. 
Let $\Aut_{0}(X)$ denote the path-connected component of the identity in $\Aut(X)$ with respect to the compact-open topology. 
The quotient group $\Aut(X) / \Aut_{0}(X) = \pi_{0}(\Aut(X))$ is called the \emph{mapping class group} of $X$. 
Isotopic self-homeomorphisms are in the same class. 
For a group $G$, the quotient group $\Out(G) = \Aut(G) / \Inn(G)$ 
is called the \emph{outer automorphism group} of $G$, 
where $\Aut(G)$ is the group consisting of automorphisms of $G$, 
and $\Inn(G)$ is the group consisting of automorphisms of $G$ by conjugations. 
The action of self-homeomorphisms on the fundamental group induces the natural homomorphism 
$\pi_{0}(\Aut(X)) \to \Out(\pi_{1}(X))$. 

Price~\cite{Price77} proved that the natural homomorphism $\pi_{0} (\Aut(S^{3} / Q_{8})) \to \Out(Q_{8})$ is an isomorphism. 
Furthermore, the group $\Out(Q_{8})$ is isomorphic to the symmetric group $\mathfrak{S}_{3}$ of degree 3. 
Indeed, the group $\Out(Q_{8})$ consists of the six classes of maps that assign 
$(\bfi, \bfj, \bfk)$ to $(\bfi, \bfj, \bfk), (\bfj, \bfk, \bfi), (\bfk, \bfi, \bfj), (\bfi, \bfk, -\bfj), (-\bfk, \bfj, \bfi), (\bfj, -\bfi, \bfk)$. 
They come from the symmetry of the cube $C$. 
There does not exist an orientation-reversing self-homeomorphism of $S^{3} / Q_{8}$. 
The $\pi$-rotation of $C$ around an axis orthogonal to faces represents the trivial element of $\Out(Q_{8})$. 
Hence the corresponding self-homeomorphism of $S^{3} / Q_{8}$ is isotopic to the identity. 
This fact for the axis $\bfi$ follows from $-\bfi \bfi \bfi = \bfi$, $-\bfi \bfj \bfi = -\bfj$, and $-\bfi \bfk \bfi = -\bfk$. 

For a link $L$ in $S^{3} / Q_{8}$, 
there are six (possibly coinciding) diagrams of $L$ depending on the choice of directions of $\bfi$, $\bfj$, and $\bfk$. 
Note that two diagrams related by the $\pi/2$-rotation of the square $S$ 
are not equivalent under the generalized Reidemeister moves in general, 
but two diagrams related by the $\pi$-rotation of $S$ are equivalent. 
Since any self-homeomorphism of $S^{3} / Q_{8}$ is isotopic to a self-homeomorphism $f$ 
obtained by a rotation of $C$. 
Then the six diagrams of the link $f(L)$ coincide with those of $L$ up to the permutation of $\bfi$, $\bfj$, and $\bfk$.

\section{Coverings}
\label{section:cover}

In this section, we consider the lifts (i.e. the preimages) of links in $X = S^{3} / Q_{8}$ by the covers of $X$. 
The proper subgroups of the quaternion group $Q_{8} = \{ \pm 1, \pm \bfi, \pm \bfj, \pm \bfk \}$ 
are $\{ 1 \}$, $\{ \pm 1 \}$, $\{ \pm 1, \pm \bfi \}$, $\{ \pm 1, \pm \bfj \}$, and $\{ \pm 1, \pm \bfk \}$. 
Let $X_{\bfi}$, $X_{\bfj}$, and $X_{\bfk}$ denote the covers of $X$ corresponding to 
the subgroups $\{ \pm 1, \pm \bfi \}$, $\{ \pm 1, \pm \bfj \}$, and $\{ \pm 1, \pm \bfk \}$, respectively. 
The 3-manifolds $X_{\bfi}$, $X_{\bfj}$, and $X_{\bfk}$ are homeomorphic to the lens space $L(4,1)$. 
We will describe them explicitly later. 
For a link $L$ in $X$, 
let $L_{\bfi}$, $L_{\bfj}$, and $L_{\bfk}$ denote the lifts of $L$ in $X_{\bfi}$, $X_{\bfj}$, and $X_{\bfk}$ 
by the covering maps. 
In fact, as it will be shown in Proposition~\ref{prop:reverse}, 
the isotopy classes of the links $L_{\bfi}$, $L_{\bfj}$, and $L_{\bfk}$ in $L(4,1)$ do not depend on the choices of homeomorphisms from $X_{\bfi}$, $X_{\bfj}$, and $X_{\bfk}$ to $L(4,1)$. 
The links $L_{\bfi}$, $L_{\bfj}$, and $L_{\bfk}$ have a common lift $\widetilde{L}$ in $S^{3}$, 
which is the lift of $L$. 
However, the links $L_{\bfi}$, $L_{\bfj}$, and $L_{\bfk}$ in $L(4,1)$ are not isotopic in general. 
Remark that the links $L_{\bfi}$, $L_{\bfj}$, and $L_{\bfk}$ in $L(4,1)$ have a common lift also in the projective 3-space $\bbR \bbP^{3}$, 
since the covering map from $\bbR \bbP^{3}$ to $X$ is unique (it corresponds to the subgroup $\{ \pm 1 \} \subset Q_{8}$). 

We discuss this more precisely. 
The easiest way to distinguish links is to compare the numbers of components. 
The free homotopy classes of closed curves in $X$ 
correspond to the conjugacy classes in $\pi_{1}(X)$: 
$[1]$, $[-1]$, $[\bfi] = [-\bfi]$, $[\bfj] = [-\bfj]$, and $[\bfk] = [-\bfk]$. 
For a knot $K$ in $X$, the number of components of lifts $K_{\bfi}$ in $X_{\bfi}$ is determined by its free homotopy class. 
If the free homotopy class of $K$ corresponds to $[1]$, $[-1]$, or $[\bfi]$, 
then the number of components of $K_{\bfi}$ is equal to two. 
Otherwise, it is equal to one. 

A link $L$ in a closed 3-manifold $M$ is \emph{hyperbolic} 
if the complement $M \setminus L$ admits a finite volume hyperbolic structure. 
Sufficiently complicated links tend to be hyperbolic and have large volume. 
We can obtain such links with the same free homotopy classes of components. 
Indeed, the following theorem holds. 

\begin{thm}
\label{thm:many-hyp}
Let $M$ be a 3-manifold with possibly empty boundary consisting of tori. 
Fix any $V > 0$ and free homotopy classes $c_{1}, \dots, c_{n}$ of close curves in $M$. 
Then there is an $n$-component hyperbolic link $L$ in $M$ 
such that $\vol (M \setminus L) > V$ and the free homotopy classes of components are $c_{1}, \dots, c_{n}$. 
Consequently, there are infinitely many such hyperbolic links. 
\end{thm}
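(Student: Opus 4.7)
The plan is threefold: realize the prescribed homotopy classes by an $n$-component link with hyperbolic complement, amplify the volume by Dehn surgery on auxiliary null-homotopic unknots, and deduce the infinitude statement from J{\o}rgensen--Thurston.

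First, using general position in the 3-manifold $M$ (the dimension count $3 > 2 \cdot 1$ permits disjointness after perturbation), I would represent $c_1,\dots,c_n$ by disjoint simple loops $\gamma_1,\dots,\gamma_n$. Myers' theorem on excellent 1-manifolds in compact 3-manifolds then supplies a PL link $L_0$, obtained from $\bigcup_i \gamma_i$ by a componentwise homotopy (so its components still lie in the classes $c_i$), whose exterior $M \setminus L_0$ is hyperbolic.

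Next, I would augment $L_0$ by $m$ pairwise disjoint null-homotopic unknots $\alpha_1,\dots,\alpha_m$ in $M \setminus L_0$, each bounding an embedded disk $D_j \subset M$ meeting $L_0$ transversely in exactly two points, and chosen so that $M \setminus (L_0 \cup \bigcup_j \alpha_j)$ is hyperbolic of volume greater than $V$. Hyperbolicity follows from another appeal to Myers; the volume can be driven above $V$ by taking $m$ large and placing the $\alpha_j$ in a chain-like pattern, invoking Thurston's theorem that drilling out a simple closed geodesic from a hyperbolic 3-manifold strictly increases its volume. Then, for integers $k_1,\dots,k_m$ taken sufficiently large, I would perform $1/k_j$ Dehn surgery on each $\alpha_j$ with respect to the disk framing. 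Since $\alpha_j$ bounds $D_j$ in $M$, this surgery preserves the ambient manifold while modifying $L_0$ by $k_j$ Rolfsen twists along $D_j$; because $\alpha_j$ is null-homotopic in $M$, these twists preserve the free homotopy class of every component of $L_0$. By Thurston's Dehn surgery theorem, for $k_j$ large enough the surgered complement is hyperbolic and its volume approximates $\vol(M \setminus (L_0 \cup \bigcup_j \alpha_j)) > V$ from below.

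The resulting link has $n$ components in the classes $c_1,\dots,c_n$ and hyperbolic complement of volume exceeding $V$. For the final statement, varying $V$ produces hyperbolic links whose complement volumes tend to infinity; since by J{\o}rgensen--Thurston only finitely many hyperbolic 3-manifolds have volume below any fixed bound, infinitely many non-isotopic links are obtained. The main obstacle is the second step: simultaneously ensuring that each disk $D_j$ meets $L_0$ (so that the subsequent surgery is nontrivial yet the augmented complement stays hyperbolic) and that the accumulated volume is unbounded as $m$ grows --- achieving both likely requires a careful combinatorial construction, perhaps modeled on augmented hyperbolic link families or on nesting tangles along a strand of $L_0$.
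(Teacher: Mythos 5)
Your route---augmenting $L_0$ by crossing circles and recovering the link by Rolfsen twists realized as $1/k_j$ Dehn fillings---is genuinely different from the paper's, which instead drills a Myers arc $\gamma$ joining two points of one component, extracts a hyperbolic piece $M_0'$ with totally geodesic $4$-punctured-sphere boundary, and glues in an explicit high-volume tangle $T_n$ built from the magic manifold, with hyperbolicity and the volume lower bound certified by the Futer--Kalfagianni--Purcell estimate and the Agol--Storm--Thurston gluing inequality. Your first and third steps are sound: Myers does give a link homotopic to $\bigcup_i \gamma_i$ with excellent exterior; the twists preserve $M$ and the free homotopy classes because each $\alpha_j$ bounds a disk in $M$; and Thurston's Dehn surgery theorem gives hyperbolicity and volume exceeding $V$ for large $k_j$ \emph{once} the augmented complement is hyperbolic of volume exceeding $V$. (The infinitude at the end is also fine, though not quite for the reason you state: there can be infinitely many hyperbolic $3$-manifolds below a fixed volume bound; what you actually use is that infinitely many distinct volumes are attained.)

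The genuine gaps are both in your second step, and you flag only one of them. First, hyperbolicity of $M \setminus (L_{0} \cup \bigcup_{j} \alpha_{j})$ does not follow from ``another appeal to Myers'': Myers' theorem produces a link in prescribed free homotopy classes with excellent exterior only after a homotopy, and a homotopy of the $\alpha_{j}$ destroys exactly the property you need them to retain, namely that each bounds an embedded disk in $M$ meeting $L_{0}$ transversely in two points (this is what makes the $1/k_{j}$ surgery return $M$ and act on $L_{0}$ by a twist). So hyperbolicity of a \emph{specific} augmented configuration must be proved by hand, e.g.\ by augmented-link techniques, which is nontrivial in an arbitrary $M$. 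Second, ``drilling a closed geodesic strictly increases volume'' gives monotonicity but not unboundedness, since the increments could decay to zero. This second gap is repairable: an $m$-cusped hyperbolic $3$-manifold has volume at least a universal constant times $m$ (Adams), so once hyperbolicity of the augmentation is established the volume grows linearly in the number of crossing circles; but as written the claim $\vol > V$ is unsupported. The paper's construction sidesteps both issues by concentrating all the complexity in one explicitly certified tangle $T_{n}$ and gluing along totally geodesic boundary, where volumes are superadditive.
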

\begin{proof}
Let $L_{0}$ be an $n$-component link in $M$ 
such that the free homotopy classes of components are $c_{1}, \dots, c_{n}$. 
For a link or an embedded graph $\Gamma$ in $M$, 
the complement of a open regular neighborhood of $\Gamma$ in $M$ 
is called the \emph{exterior} of $\Gamma$ in $M$ 
and denoted by $M \cut \Gamma$. 
By applying a result of Myers \cite[Theorem 1.1]{Myers93} to the exterior $M \cut L_{0}$, 
we obtain an embedded arc $\gamma$ in the interior of $M$ that satisfies the following: 
\begin{enumerate}
\item $\gamma \cap L_{0}$ consists of the endpoints $p_{0}$ and $p_{1}$ of $\gamma$. 
\item $p_{0}$ and $p_{1}$ are contained in a single component $K$ of $L_{0}$. 
\item The exterior $M \cut (\gamma \cup L_{0})$ contains no essential surfaces of non-negative Euler characteristic. 
\end{enumerate}
The boundary components of $M \cut (\gamma \cup L_{0})$ consist of 
$n-1$ tori $T_{1}, \dots, T_{n-1}$ and a closed surface $\Sigma$ of genus two. 
Due to Thurston's hyperbolization \cite{Thurston82}, the condition (3) implies that 
$M_{0} = (M \cut (\gamma \cup L_{0})) \setminus (T_{1} \cup \dots \cup T_{n-1})$ admits a finite volume hyperbolic structure with totally geodesic boundary $\Sigma$. 
Note that 
a 3-manifold with boundary consisting of surfaces of negative Euler characteristic 
admits a finite volume hyperbolic structure with totally geodesic boundary 
if and only if its double admits a finite volume hyperbolic structure 
(see \cite{AST07, CFW10} for more details).

Let $N(\gamma)$ denote a small closed regular neighborhood of $\gamma$. 
Then the 3-ball $N(\gamma)$ intersects $K$ at two arcs $\beta_{0}$ and $\beta_{1}$. 
The set $K \setminus N(\gamma)$ is the union of two arcs $\alpha_{0}$ and $\alpha_{1}$. 
There are two annuli $A_{0}$ and $A_{1}$ in $\Sigma$ that are respectively contained in the boundary of a closed regular neighborhood of $\alpha_{0}$ and $\alpha_{1}$. 
Then $M_{0}' = M_{0} \setminus (A_{0} \cup A_{1})$ admits a finite volume hyperbolic structure with totally geodesic boundary $\Sigma' = \Sigma \setminus (A_{0} \cup A_{1})$. 
Here $\Sigma'$ is a 4-punctured sphere, and $A_{0}$ and $A_{1}$ are rank-1 cusps. 
This follows from the fact that 
the manifold $M_{0}'$ still satisfies the acylindricality for Thurston's hyperbolization. 
In another way, we may consider the double of $M_{0}$ along the totally geodesic boundary $\Sigma$. 
Then we obtain the doubles of $A_{0}$ and $A_{1}$ as cusps of the double of $M_{0}'$ 
by drilling the two corresponding closed geodesics in the image of $\Sigma$, derived from the meridian of $K$. 
More generally, the complement of a simple geodesic link in a hyperbolic 3-manifold is also hyperbolic \cite[Proposition 4]{Kojima88}. 

We prepare hyperbolic tangles of large volume. 
For $n \geq 1$, let $T_{n}$ be a tangle in a 3-ball $B$ that consists two arcs $\tau_{0}$ and $\tau_{1}$ 
shown in the left of Figure~\ref{fig: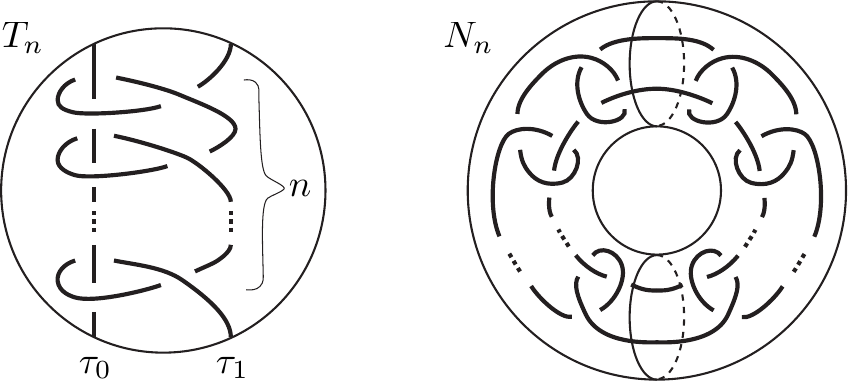}. 
In Lemma~\ref{lem:tangle}, we will show that $T_{n}$ is hyperbolic and $\vol(T_{n}) > V$ for sufficiently large $n$. 
Here we say $T_{n}$ is hyperbolic 
if the complement of $T_{n}$ admits a hyperbolic structure 
such that the 4-punctured sphere $\partial B \setminus (\tau_{0} \cup \tau_{1})$ is totally geodesic, 
and $\tau_{0}$ and $\tau_{1}$ correspond to rank-1 cusps. 
We fix such $n$. 

We obtain a link $L$ by replacing $N(\gamma)$ with the tangle $T_{n}$ 
so that the endpoints of each $\tau_{i}$ for $i = 0,1$ are glued to the endpoints of $\beta_{i}$. 
Then the link $L$ is homotopic to the link $L_{0}$. 
The manifold $M \setminus L$ is obtained by gluing the manifolds $M_{0}'$ and $T_{n}$ along the totally geodesic boundary. 
Note that the hyperbolic structures on these 4-punctured spheres do not necessarily coincide. 
Then the manifold $M \setminus L$ admits a finite volume hyperbolic structure, and $\vol (M \setminus L) \geq \vol(M_{0}') + \vol(T_{n}) > V$ by Theorem~\ref{thm:glue}. 
\end{proof}

\begin{thm}
\label{thm:glue}
For each $i = 1,2$, let $M_{i}$ be a finite volume hyperbolic 3-manifold with totally geodesic boundary $\Sigma_{i}$. 
Suppose that there is a homeomorphim $f \colon \Sigma_{1} \to \Sigma_{2}$. 
Let $M_{1} \cup_{f} M_{2}$ denote the manifold obtained by gluing $M_{1}$ and $M_{2}$ along the boundary $\Sigma_{1}$ and $\Sigma_{2}$ via $f$. 
Then $M_{1} \cup_{f} M_{2}$ admits a finite volume hyperbolic structure satisfying 
\[
\vol (M_{1} \cup_{f} M_{2}) \geq \vol (M_{1}) + \vol (M_{2}). 
\] 
\end{thm}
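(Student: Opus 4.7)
The plan is to first establish that $N := M_{1} \cup_{f} M_{2}$ admits a finite-volume hyperbolic structure, then derive the volume inequality via a least-area surface argument.

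For existence of the hyperbolic structure, I would apply Thurston's hyperbolization theorem to $N$, regarded as a Haken 3-manifold whose boundary consists of the torus cusps inherited from $M_{1}$ and $M_{2}$. The identified gluing surface $\Sigma \subset N$ coming from $\Sigma_{1}$ and $\Sigma_{2}$ is closed, two-sided, of negative Euler characteristic, and incompressible, since each $\Sigma_{i}$ is totally geodesic, hence $\pi_{1}$-injective in $M_{i}$. Irreducibility of $N$ then follows from irreducibility of the $M_{i}$ together with incompressibility of $\Sigma$. The crucial step is atoroidality and anannularity: any essential torus or annulus in $N$ may be isotoped to meet $\Sigma$ transversely in a minimal collection of circles, and standard innermost-curve / outermost-arc surgeries would produce an essential torus, annulus, or compressing disk in one of the $M_{i}$, contradicting that each $M_{i}$ is hyperbolic with totally geodesic---hence acylindrical---boundary. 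Thurston's hyperbolization then furnishes a finite-volume complete hyperbolic structure on $N$.

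For the volume inequality, I would pass to a least-area representative of $\Sigma$. By the Freedman--Hass--Scott theorem, the incompressible surface $\Sigma$ is isotopic in the hyperbolic $N$ to an embedded least-area surface $\Sigma'$. Cutting $N$ along $\Sigma'$ yields two finite-volume hyperbolic 3-manifolds $N_{1}, N_{2}$, homeomorphic to $M_{1}$ and $M_{2}$ respectively, whose boundary $\Sigma'$ is minimal and hence mean-convex. Miyamoto's theorem on hyperbolic 3-manifolds with mean-convex boundary asserts that within a fixed homeomorphism type, the hyperbolic structure with totally geodesic boundary attains the minimum volume; consequently $\vol(N_{i}) \geq \vol(M_{i})$ for each $i$. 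Summing the two inequalities and using $\vol(N) = \vol(N_{1}) + \vol(N_{2})$ yields the desired bound.

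The main obstacle is the second step: Miyamoto's volume minimization for totally geodesic boundary must be applied in the cusped (finite-volume) setting, since each $M_{i}$ may carry additional torus cusps beyond the totally geodesic surface $\Sigma_{i}$, and one should verify that the monotonicity argument survives in this generality. The existence step, by contrast, is largely bookkeeping once acylindricality of $\Sigma$ in $N$ is checked. Should a direct appeal to Miyamoto be inconvenient, an alternative is to invoke the Agol--Storm--Thurston volume inequality for incompressible surfaces in hyperbolic 3-manifolds applied to $(N, \Sigma)$, which gives the same bound essentially by the same minimal-surface mechanism.
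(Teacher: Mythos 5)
Your argument follows essentially the same route as the paper: the paper likewise establishes hyperbolicity of the glued manifold by the innermost-curve atoroidality argument (citing Calegari--Freedman--Walker \cite{CFW10}) and obtains the volume bound from the least-area-surface mechanism, citing Agol--Storm--Thurston \cite[Theorem 9.1]{AST07}, which is exactly your stated fallback. The one correction: the volume-minimization statement you attribute to Miyamoto is misattributed --- Miyamoto's theorem bounds the volume of a hyperbolic manifold with totally geodesic boundary from below in terms of the Euler characteristic of the boundary, not by comparison within a fixed homeomorphism type; the fact that cutting along a least-area surface and comparing with the totally geodesic structure yields $\vol(N_i) \geq \vol(M_i)$ is due to Storm and to Agol--Storm--Thurston (via Perelman's monotonicity), so you should simply invoke \cite[Theorem 9.1]{AST07} directly, as the paper does, which also disposes of your worry about the cusped setting since that theorem is proved for finite-volume manifolds with torus cusps.
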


Hyperbolicity of $M_{1} \cup_{f} M_{2}$ was shown in \cite[Theorem 5.5]{CFW10}. 
Indeed, $M_{1} \cup_{f} M_{2}$ is atoroidal. 
For otherwise an essential torus in $M_{1} \cup_{f} M_{2}$ 
induces an essential torus or annulus in $M_{1}$ or $M_{2}$, 
which contradicts the fact that $M_{1}$ and $M_{2}$ are atoroidal and acylindrical. 
The volume estimate directly follows from \cite[Theorem 9.1]{AST07}, 
where it was shown in a more general setting. 
See also \cite[Theorem 1.5]{Adams21}. 

\fig[width=9cm]{tangle_n.pdf}{The tangle $T_{n}$ and the manifold $N_{n}$}

\fig[width=10cm]{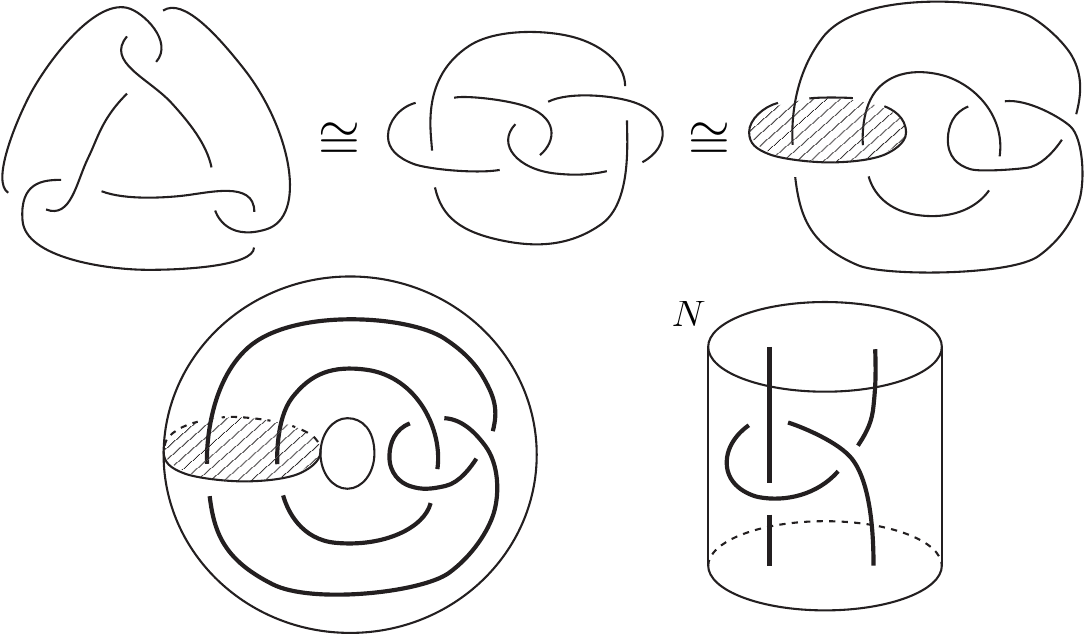}{The $6^{3}_{1}$ link and the manifold $N$}

\begin{lem}
\label{lem:tangle}
Fix $V > 0$. 
Then the tangle $T_{n}$ is hyperbolic and $\vol(T_{n}) > V$
for sufficiently large $n$. 
\end{lem}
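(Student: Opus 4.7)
The plan is to realize the complement $N_n$ of $T_n$ in the ball $B$ as obtained by gluing together $n$ copies of a fixed hyperbolic building block $P$ along totally geodesic four-punctured spheres, and then to apply Theorem~\ref{thm:glue} iteratively to obtain volume growth linear in $n$.

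First, I would inspect Figure~\ref{fig:tangle_n.pdf} and observe that $T_n$ is constructed by stacking $n$ copies of a basic two-strand tangle piece. The complement $N_n = B \setminus T_n$ accordingly contains $n-1$ pairwise disjoint properly embedded four-punctured spheres $S_1, \dots, S_{n-1}$ that decompose $N_n$ into $n$ pieces $P_1, \dots, P_n$, each homeomorphic to a fixed compact 3-manifold $P$ whose boundary consists of four-punctured spheres together with two rank-$1$ cusp annuli coming from $\tau_0, \tau_1$. The two outermost pieces $P_1$ and $P_n$ additionally contain pieces of the outer boundary $\partial B \setminus (\tau_0 \cup \tau_1)$.

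Second, I would identify $P$ with a finite-volume hyperbolic 3-manifold whose four-punctured sphere boundary components are all totally geodesic. The companion Figure~\ref{fig:magic.pdf} indicates the intended route: recognize the double of $P$ along one of its four-punctured sphere boundaries as the magic manifold $N$ (the complement of the $6^{3}_{1}$ link), or as a Dehn filling thereof. Together with the fact (used already in the proof of Theorem~\ref{thm:many-hyp}, cf.\ \cite{AST07, CFW10}) that a compact $3$-manifold with boundary consisting of incompressible surfaces of negative Euler characteristic admits a finite-volume hyperbolic structure with totally geodesic boundary if and only if its double does, this identification produces the desired hyperbolic structure on $P$.

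Finally, I would apply Theorem~\ref{thm:glue} inductively to the chain $P_1 \cup_{f_1} P_2 \cup_{f_2} \cdots \cup_{f_{n-1}} P_n$, where the gluing homeomorphisms $f_i \colon \partial P_i \supset S_i \to S_i \subset \partial P_{i+1}$ are dictated by the combinatorics of $T_n$. The resulting manifold is $N_n$ with a finite-volume hyperbolic structure in which $\partial B \setminus (\tau_0 \cup \tau_1)$ is totally geodesic and $\tau_0, \tau_1$ correspond to rank-$1$ cusps, and satisfies
\[
\vol(N_n) \geq n \cdot \vol(P).
\]
Since $\vol(P) > 0$, choosing $n > V / \vol(P)$ yields $\vol(T_n) > V$, as required. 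The main obstacle is the geometric identification of $P$ in the second step: securing a hyperbolic structure with \emph{all} four-punctured sphere boundaries simultaneously totally geodesic, and with the correct peripheral combinatorics at $\tau_0$ and $\tau_1$, requires either an explicit matching with (a filling of) the magic manifold or a direct verification of acylindricity and atoroidality for Thurston's hyperbolization. Once this is in hand, the iterative application of Theorem~\ref{thm:glue} is routine.
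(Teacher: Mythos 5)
Your strategy --- cut $B\setminus T_n$ into $n$ identical blocks along four-punctured spheres and stack volume via Theorem~\ref{thm:glue} --- breaks down at exactly the step you yourself flag as ``the main obstacle,'' and that obstacle is not one you can expect to clear. The repeating unit of $T_n$ is a twist-type piece of two strands between consecutive four-punctured spheres; the complement of such a piece is an $I$-bundle over the four-punctured sphere, so the surfaces $S_1,\dots,S_{n-1}$ are all parallel to $\partial B\setminus(\tau_0\cup\tau_1)$ and the decomposition is not essential. A cylindrical block admits no hyperbolic structure with totally geodesic boundary, so there is no hyperbolic $P$ to feed into Theorem~\ref{thm:glue}, and the inequality $\vol(N_n)\geq n\,\vol(P)$ has no starting point. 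This is the standard difficulty with long twist regions, and it is precisely why the paper does not argue by gluing tangle blocks.

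The paper's proof instead passes to the double $DT_n$ of $T_n$ along $\partial B$ (invoking the equivalence, already quoted in the proof of Theorem~\ref{thm:many-hyp}, between hyperbolicity with totally geodesic boundary and hyperbolicity of the double), and realizes $DT_n$ as a \emph{Dehn filling} of an auxiliary manifold $N_n$ built by gluing $2n$ copies of a fixed piece $N$ --- the magic manifold cut along a totally geodesic three-punctured sphere --- along totally geodesic boundary. There the gluing step is legitimate because $N$ genuinely is hyperbolic with totally geodesic boundary, and it gives $\vol(N_n)=2n\,\vol(N)$. The twisting that defeats your block decomposition is encoded in the filling slope $s_n$, whose length on a fixed horocusp grows linearly, $\ell_n=n\ell$; the theorem of Futer--Kalfagianni--Purcell then delivers both the hyperbolicity of $DT_n$ once $\ell_n>2\pi$ and the bound $\vol(DT_n)\geq\bigl(1-(2\pi/\ell_n)^2\bigr)^{3/2}\vol(N_n)$, which exceeds $2V$ for large $n$. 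If you want to salvage a gluing-based argument, you must first augment $T_n$ (add crossing circles) so that the repeating unit becomes acylindrical and then recover $T_n$ by Dehn filling --- which is, in effect, what the paper does.
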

\begin{proof}
We show that the double $DT_{n}$ of $T_{n}$ along the boundary $\partial B$ is hyperbolic for large $n$. 
By abuse of notation, we continue to write $T_{n}$ and $DT_{n}$ for their complements. 
We begin with the $6^{3}_{1}$ link in $S^{3}$ shown in the first line of Figure~\ref{fig:magic.pdf}. 
Its complement is called the magic 3-manifold, 
and the hyperbolic structure was constructed in \cite[Example 6.8.2]{Thurston78}. 
We use the third diagram in Figure~\ref{fig:magic.pdf} to show the decomposition along a 3-punctured sphere. 
In the lower left of Figure~\ref{fig:magic.pdf}, we show a link in the solid torus whose complement is homeomorphic to the $6^{3}_{1}$ link complement. 
An essential 3-punctured sphere is isotopic to a totally geodesic one due to Adams \cite{Adams85}. 
By cutting the magic 3-manifold along the totally geodesic 3-punctured sphere (2-punctured disk) of the shaded area, 
we obtain the hyperbolic 3-manifold $N$ with totally geodesic boundary and three rank-1 cusps. 
We construct a hyperbolic 3-manifold $N_{n}$ by gluing $2n$ copies of $N$ and its mirror image along the totally geodesic boundary as shown in the right of Figure~\ref{fig:tangle_n.pdf}.
Furthermore, we obtain the manifold $DT_{n}$ by a Dehn filling of $N_{n}$. 
Then we can apply a result of Futer, Kalfagianni, and Purcell \cite[Theorem 1.1]{FKP08}. 
Fix a horocusp neighborhood $C$ of the cusp of $N$ that is drawn outermost. 
By gluing copies of $C$, we obtain a horocusp neighborhood $C_{n}$ in $N_{n}$. 
Let $\ell_{n}$ denote the length of the slope $s_{n}$ on $\partial C_{n}$ 
such that the Dehn filling along $s_{n}$ gives $DT_{n}$. 
Here the length is measured for a simple closed geodesic in a Euclidean torus. 
If $\ell_{n} > 2\pi$, then $DT_{n}$ is hyperbolic, and 
\[
\vol(DT_{n}) \geq \left( 1 - \left( \frac{2\pi}{\ell_{n}} \right)^{2} \right)^{3/2} \vol(N_{n}). 
\]
Since the manifold $N_{n}$ can be constructed as a double, 
the slope $s_{n}$ is orthogonal to slope corresponding to the core of $C$. 
Hence $\ell_{n} = n \ell$ for some $\ell > 0$.
Moreover, $\vol(N_{n}) = 2n \vol(N)$. 
Therefore the manifold $DT_{n}$ is hyperbolic and $\vol(DT_{n}) > 2V$ for large $n$. 
Then $T_{n}$ is hyperbolic and $\vol(T_{n}) > V$. 
\end{proof}

Now we can show the main theorem. 

\begin{thm}  
\label{thm:main}
There are infinitely many triples of non-isotopic hyperbolic links in $L(4,1)$ 
such that the three lifts of each triple in $S^{3}$ are isotopic. 
\end{thm}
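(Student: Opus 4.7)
The plan is to apply Theorem \ref{thm:many-hyp} to $M = X = S^{3}/Q_{8}$ with carefully chosen free homotopy classes and then lift the resulting hyperbolic link along the three double covers $X_{\bfi}, X_{\bfj}, X_{\bfk}$ to produce each triple.

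First, I would specify the free homotopy classes of the components. For a $3$-component link $L$ in $X$, take classes $c_{1} = c_{2} = [\bfi]$ and $c_{3} = [\bfj]$. By the counting rule recalled just before Theorem \ref{thm:many-hyp}, a component with class $[g]$ contributes two components to its lift in $X_{H}$ exactly when $g \in H \subset Q_{8}$, and one component otherwise. Applied to each of the three double covers, this gives $L_{\bfi}$ with $2+2+1 = 5$ components, $L_{\bfj}$ with $1+1+2 = 4$ components, and $L_{\bfk}$ with $1+1+1 = 3$ components. Since these three numbers are pairwise distinct, the three lifts in $L(4,1)$ are automatically pairwise non-isotopic, independently of which representative link $L$ we choose.

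Next, given any $V > 0$, I would apply Theorem \ref{thm:many-hyp} with $M = X$ (closed, hence with empty toroidal boundary) and the above free homotopy classes to obtain a hyperbolic link $L \subset X$ with $\vol(X \setminus L) > V$. Each complement $X_{\bullet} \setminus L_{\bullet}$ is a double cover of $X \setminus L$, so the complete finite-volume hyperbolic structure pulls back; hence each $L_{\bullet}$ is a hyperbolic link in $L(4,1)$ with $\vol(L(4,1) \setminus L_{\bullet}) = 2 \vol(X \setminus L) > 2V$. Meanwhile, the common lift of $L_{\bfi}, L_{\bfj}, L_{\bfk}$ to $S^{3}$ is the single $Q_{8}$-lift $\widetilde{L}$ of $L$, so the three $S^{3}$-lifts of the triple coincide and are in particular isotopic.

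Finally, letting $V \to \infty$ yields links $L$ with unbounded complement volume in $X$, hence triples $(L_{\bfi}, L_{\bfj}, L_{\bfk})$ of unbounded complement volume in $L(4,1)$; these represent infinitely many distinct isotopy classes of triples in $L(4,1)$. The main obstacle has already been absorbed into Theorem \ref{thm:many-hyp}, which supplies hyperbolic links in $X$ realizing any prescribed list of free homotopy classes with arbitrarily large volume; beyond that, the argument is a routine assembly using the component-count calculation above and the standard fact that complete finite-volume hyperbolic structures lift to finite covers.
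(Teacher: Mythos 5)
Your proof is correct and takes essentially the same route as the paper: choose the free homotopy classes of the components so that the three double covers see distinct component counts (the paper uses $n_{1}$ components of $[\bfi]$, $n_{2}$ of $[\bfj]$, $n_{3}$ of $[\bfk]$ with $n_{1}, n_{2}, n_{3}$ mutually distinct, of which your $(2,1,0)$ choice is an instance), then invoke Theorem~\ref{thm:many-hyp} to produce infinitely many such hyperbolic links $L$ in $S^{3}/Q_{8}$ with a common lift $\widetilde{L}$ in $S^{3}$. Your only additions are the explicit volume bookkeeping for the lifts in $L(4,1)$, which the paper leaves implicit.
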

\begin{proof}
Let $L \subset X$ be a link consisting of 
$n_{1}$ components of the free homotopy class $[\bfi]$, 
$n_{2}$ components of $[\bfj]$, and 
$n_{3}$ components of $[\bfk]$. 
Suppose that $n_{1}$, $n_{2}$, and $n_{3}$ are mutually distinct. 
Then the links $L_{\bfi} \subset X_{\bfi}$, $L_{\bfj} \subset X_{\bfj}$, and $L_{\bfk} \subset X_{\bfk}$ 
respectively have $2n_{1} + n_{2} + n_{3}$, $n_{1} + 2n_{2} + n_{3}$, and $n_{1} + n_{2} + 2n_{3}$ components. 
Since these three numbers are distinct, 
the three links $L_{\bfi}$, $L_{\bfj}$, and $L_{\bfk}$ in $L(4,1)$ are distinct. 
However, the lifts of $L_{\bfi}$, $L_{\bfj}$, and $L_{\bfk}$ in $S^{3}$ are the same as the lift of $L$. 
Theorem~\ref{thm:many-hyp} implies that there are infinitely many choices of such a hyperbolic link $L \subset X$. 
\end{proof}

\begin{rem}
Suppose that $L_{\bfi}$, $L_{\bfj}$, and $L_{\bfk}$ are distinct hyperbolic links in $L(4,1)$ 
whose lifts in $S^{3}$ are isotopic to a link $\widetilde{L}$ as above. 
Then deck transformations of the covering map $S^{3} \to L(4,1)$ induce 
three $\bbZ / 4\bbZ$-actions on the hyperbolic 3-manifold $S^{3} \setminus \widetilde{L}$. 
The actions are isotopic to isometric actions by the Mostow rigidity. 
These three isometric $\bbZ / 4\bbZ$-actions are different, 
since $L_{\bfi}$, $L_{\bfj}$, and $L_{\bfk}$ are different. 
\end{rem}

\begin{rem}
The lift in $S^{3}$ of a link in $S^{3} / Q_{8}$ has at least two components. 
Due to Boileau and Flapan~\cite{BF87} and Sakuma~\cite{Sakuma86}, 
a prime knot in $S^{3}$ has a unique freely periodic symmetry, 
and so it cannot be a common lift of mutually non-isotopic knots in $L(4,1)$. 
\end{rem}

It is to be expected that the above construction can be applied to other 3-manifolds. 
We may ask whether there are examples beyond the above construction. 

\begin{ques}
Suppose that links $L_{0}$ and $L_{1}$ in a 3-manifold $X$ have isotopic lifts $\widetilde{L}$ in a finite cover $\widetilde{X}$ of $X$. 
Then is there a link $\widehat{L}$ in a 3-manifold (or orbifold) $\widehat{X}$ 
such that $L_{0}$ and $L_{1}$ are the lifts of $\widehat{L}$ by two covering maps from $X$ to $\widehat{X}$? 
\end{ques}

This assertion is true for hyperbolic links. 
Indeed, the deck transformations induce two subgroups $G_{0}$ and $G_{1}$ of the finite group $\Isom (\widetilde{X} \setminus \widetilde{L})$. 
Let $G$ denote the subgroup of $\Isom (\widetilde{X} \setminus \widetilde{L})$ generated by $G_{0}$ and $G_{1}$. 
Since the action of $G$ on $\widetilde{X} \setminus \widetilde{L}$ preserves the meridians, 
it extends to an action on $\widetilde{X}$. 
Then the orbifold $\widehat{X} = \widetilde{X} / G$ and the ``link'' $\widehat{L} = \widetilde{L} / G$ are desired. 
Note that $\widehat{L}$ might intersect the singular set of $\widehat{X}$. 

We construct diagrams of lifts of a link in $S^{3} / Q_{8}$. 
A \emph{disk diagram} of a link in a lens space was introduced in \cite{CMM13} as follows. 
Let $p$ and $q$ be positive coprime integers. 
The lens space $L(p,q)$ is obtained from a lens-shaped fundamental domain $B$ by gluing the opposite faces using the counterclockwise $2\pi q/ p$-rotation. 
Let $L$ be a link in $L(p,q)$ located in general position for the fundamental domain $B$. 
Then $L$ is obtained from $L' \subset B$ by the gluing, 
where $L'$ is a disjoint union of properly embedded arcs and circles. 
We may isotope $L$ so that the endpoints of $L'$ are contained in small neighborhood of the equator of $B$. 
We obtain a diagram of $L$ from $L'$ by the projection of $B$ to a disk, 
where the equator of $B$ is projected to the boundary of the disk. 
The endpoints of arcs (called the \emph{boundary points}) are drawn in the boundary of the disk (see Figure~\ref{fig: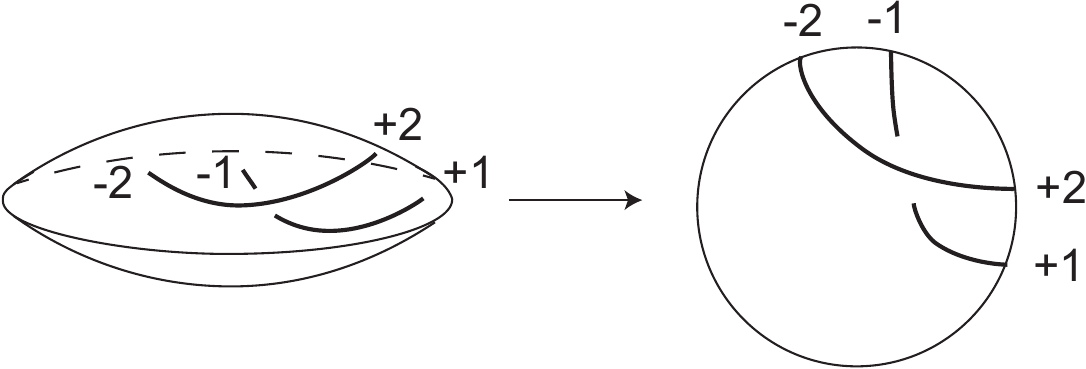}). 

\fig[width=9cm]{disk-diagram.pdf}{A diagram of a link in a lens space}

A disk diagram consists of properly immersed arcs and circles on a disk with data of the overpass and underpass at each crossing and labels $\pm 1, \dots, \pm n$ on the boundary points. 
The boundary points labeled with $+t$ and $-t$ come from the endpoint of $L'$ respectively in the upper and lower faces, which are glued together in $L$. 
By gluing the corresponding boundary points, 
we can determine the link $L$ in $L(p,q)$ from a disk diagram. 
The diagrams of isotopic links are connected by generalized Reidemeister moves 
introduced by Cattabriga, Manfredi, and Mulazzani \cite{CMM13}. 
The generalized Reidemeister moves for disk diagrams of links in $L(4,1)$ 
are illustrated in Figure~\ref{fig: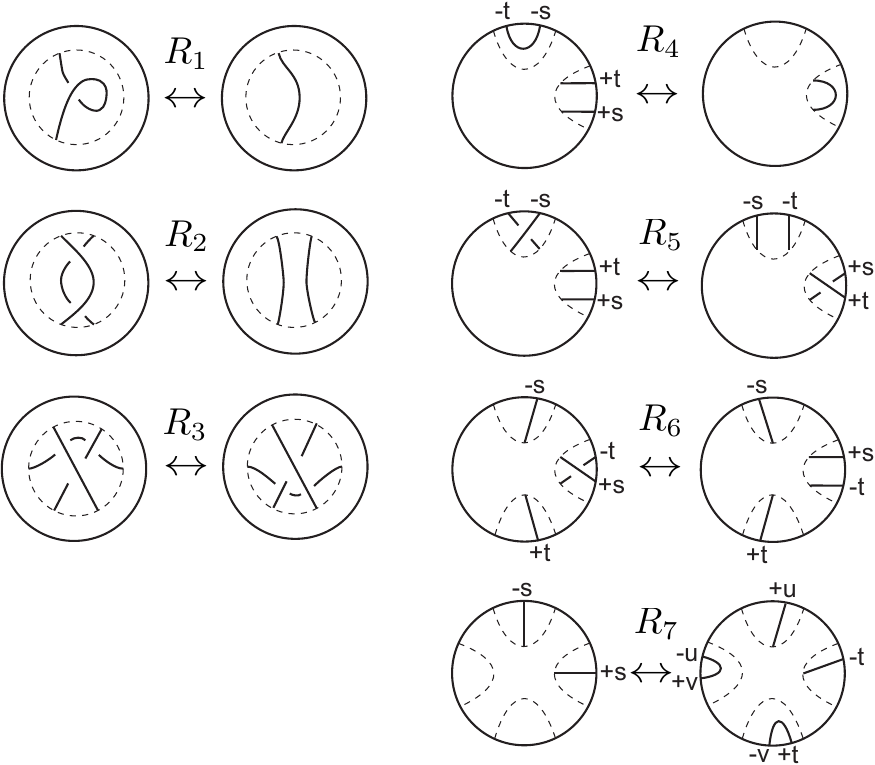} 
although we omit details. 
A disk diagram is called \emph{standard} 
if the labels of the boundary points are arranged in the order of $(+1, \dots, +n, -1, \dots, -n)$ on the boundary. 
For any link in a lens space, we can obtain a standard disk diagram by $R_{6}$ moves \cite[Proposition 1]{Manfredi14}. 

\fig[width=12cm]{disk-reidemeister.pdf}{Generalized Reidemeister moves for disk diagrams}

Manfredi~\cite{Manfredi14} gave a construction of a diagram of the lift of $L$ in $S^{3}$ from a standard disk diagram $D$ of $L$. 
The case of $L(4,1)$ is shown in Figure~\ref{fig: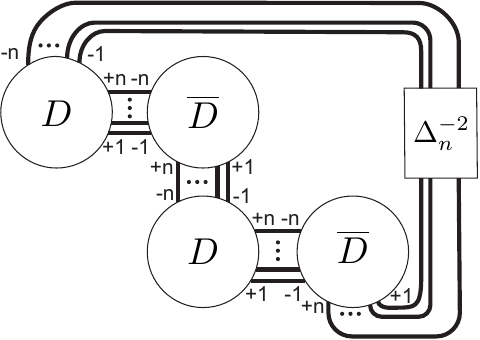}. 
Here $D$ is a standard diagram 
in which the positive (resp. negative) boundary points are on the right (resp. upper) side. 
The diagram $\overline{D}$ is obtained from $D$ 
by the reflection about the axis from upper left to lower right 
and the inversion of the crossings. 
(The labels are preserved.) 
Following the notation in \cite{Manfredi14}, 
$\Delta_{n}^{-2}$ is the positive full twist of $n$ strands 
(the crossing is positive when the orientations of strands are upward). 

\fig[width=9cm]{lift_4.pdf}{The lift in $S^{3}$ of a link in $L(4,1)$}

For a link $L$ in $S^{3} / Q_{8}$, 
we consider the three lifts $L_{\bfi}$, $L_{\bfj}$, and $L_{\bfk}$ of $L$ in $L(4,1)$. 
Recall that the cube $C$ is a fundamental domain of $S^{3} / Q_{8}$. 
We decompose $C$ into four triangular prisms 
so that the projection in the direction of $\bfi$ induces the decomposition of a square by the diagonals. 
We construct $C_{\bfi}$ from $C$ and copies of the prisms 
by gluing the opposite faces using the counterclockwise $\pi / 2$-rotation 
as indicated in Figure~\ref{fig: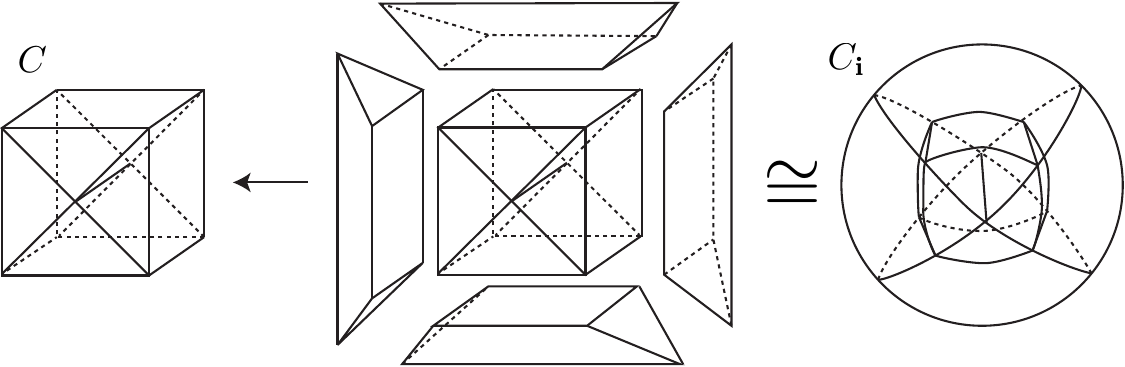}. 
Then $C_{\bfi}$ is a lens-shaped fundamental domain of the lens space $X_{\bfi}$. 
Since the dihedral angles of $C$ in the spherical metric are equal to $2\pi / 3$, 
the domain $C_{\bfi}$ is bounded by two orthogonal totally geodesic disks. 
After setting the link $L$ in general position for the prisms, 
we obtain the intersection of the lift $L_{\bfi}$ and the domain $C_{\bfi}$ 
by the construction of $C_{\bfi}$. 
Then we obtain a diagram of the lift $L_{\bfi}$. 
By changing the direction of projection, 
we also obtain diagrams of the lifts $L_{\bfj}$ and $L_{\bfk}$. 

\fig[width=12cm]{square-cover.pdf}{Construction of a lens-shaped domain from the cube $C$}

Figure~\ref{fig: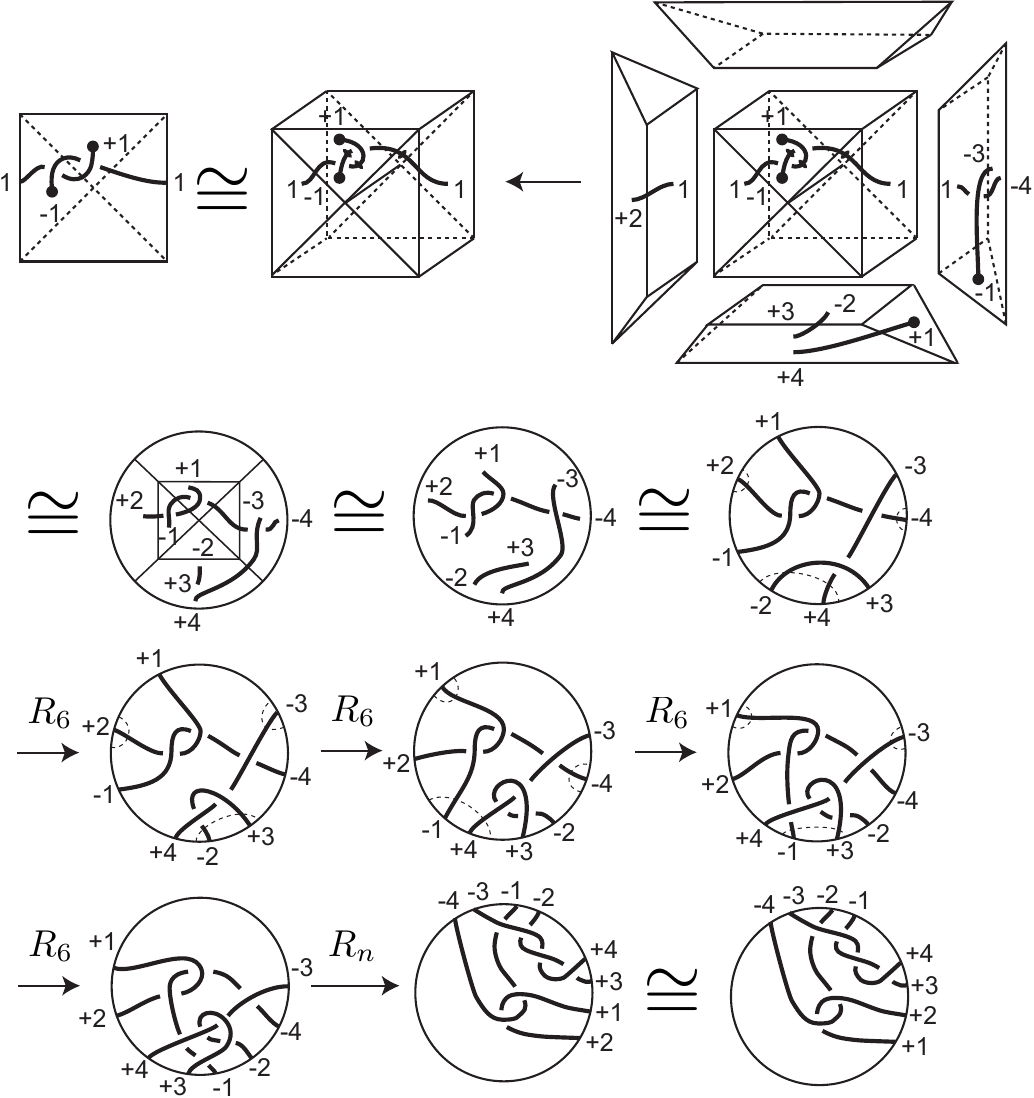} shows an example of the construction of a lift. 
At first, we glue prisms containing links. 
By projecting to a disk, we obtain the diagram in the left of the second line. 
At this time, the boundary points in each outer prism are arranged on a radius. 
Keeping the condition that 
the boundary point with label $+t$ is mapped to that with $-t$ by the counterclockwise $\pi / 2$-rotation, 
we isotope this diagram 
so that any two boundary points are not placed on a common radius. 
Furthermore, we push out the boundary points to the boundary of the disk. 
When a new crossing occurs, 
a boundary point with a positive (resp. negative) label passes over (resp. under). 
Then we obtain the disk diagram in the right of the second line. 
We next obtain the standard disk diagram in the left of the last line by $R_{6}$ moves. 
We may adjust diagrams by other moves and relabeling as shown in the last line. 

\fig[width=12cm]{lift_square.pdf}{A lift in $L(4,1)$ of a link in $S^{3} / Q_{8}$}

Note that identifications of $X_{\bfi}$, $X_{\bfj}$, and $X_{\bfk}$ with $L(4,1)$ is not unique. 
Indeed, the mapping class group of $L(4,1)$ is isomorphic to $\bbZ / 2\bbZ$, 
and its nontrivial element is obtained by reversing the lens-shaped fundamental domain 
(see \cite{CM18}). 
Nevertheless, the isotopy classes of the links $L_{\bfi}$, $L_{\bfj}$, and $L_{\bfk}$ are uniquely determined. 

\begin{prop}
\label{prop:reverse}
Fix a double covering map from $L(4,1)$ to $S^{3} / Q_{8}$. 
Let $L'$ be a link in $L(4,1)$ that is a lift of a link $L$ in $S^{3} / Q_{8}$. 
Suppose that $\tau \colon L(4,1) \to L(4,1)$ is a homeomorphism that represents the nontrivial mapping class. 
Then the link $\tau (L')$ is isotopic to $L$. 
\end{prop}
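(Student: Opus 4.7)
The plan is to produce an explicit self-homeomorphism of $L(4,1)$ that stabilizes $L'$ setwise and realizes the nontrivial mapping class, and then invoke the identification $\pi_{0}(\Aut(L(4,1))) \cong \bbZ/2\bbZ$.

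First I would let $\sigma \colon L(4,1) \to L(4,1)$ denote the nontrivial deck transformation of the fixed double cover $p \colon L(4,1) \to S^{3}/Q_{8}$. Because $L' = p^{-1}(L)$, the equality $\sigma(L') = L'$ holds as subsets of $L(4,1)$, so in particular $\sigma(L')$ is isotopic to $L'$.

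The main step is to verify that $\sigma$ represents the nontrivial mapping class. Without loss of generality I may assume that $p$ corresponds to the index-2 subgroup $H = \{\pm 1, \pm \bfi\} \subset Q_{8}$, so that $\pi_{1}(L(4,1)) \cong H$ is generated by the class of $\bfi$. Since $H$ is normal in $Q_{8}$, the action of $\sigma$ on $\pi_{1}(L(4,1))$ is conjugation by any coset representative of the nontrivial coset, for example $\bfj$. The quaternion identity $\bfj \bfi \bfj^{-1} = -\bfi$ shows that $\sigma_{*}$ is multiplication by $-1$ on $\bbZ/4\bbZ$, and in particular is not the identity; hence $\sigma$ is not isotopic to the identity homeomorphism.

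By the cited fact $\pi_{0}(\Aut(L(4,1))) \cong \bbZ/2\bbZ$, both $\sigma$ and $\tau$ lie in the unique nontrivial mapping class, so $\tau$ is isotopic to $\sigma$, and consequently $\tau(L')$ is isotopic to $\sigma(L') = L'$. The only non-routine ingredient is identifying the deck transformation's action on $\pi_{1}$ with conjugation in $Q_{8}$; once that is in hand, the proof reduces to a one-line computation in the quaternion group.
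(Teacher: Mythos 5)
Your proposal is correct and follows essentially the same route as the paper: identify the nontrivial deck transformation, observe it fixes $L'$ setwise, compute that it acts as conjugation by $\bfj$ on $\pi_{1}(L(4,1)) = \{\pm 1, \pm\bfi\}$ sending $\bfi$ to $-\bfi$, and conclude from $\pi_{0}(\Aut(L(4,1))) \cong \bbZ/2\bbZ$ that it is isotopic to $\tau$. The only difference is cosmetic (you write $\bfj\bfi\bfj^{-1}$ where the paper writes $-\bfj\bfi\bfj$, which is the same element).
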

\begin{proof}
Let $\iota \colon L(4,1) \to L(4,1)$ denote the nontrivial element of the deck transformation. 
Consider how $\iota$ acts on $\pi_{1}(L(4,1)) \cong \bbZ / 4\bbZ$. 
We may assume that $\pi_{1}(L(4,1)) = \{ \pm 1, \pm \bfi \}$. 
The conjugate by an element of $Q_{8} \setminus \{ \pm 1, \pm \bfi \}$ determines the action of $\iota$. 
Since $-\bfj \bfi \bfj = -\bfi$, 
the action of $\iota$ on $\{ \pm 1, \pm \bfi \}$ is nontrivial. 
Hence $\iota$ is isotopic to $\tau$. 
Since the link $L'$ is invariant under the deck transformation, 
the link $\tau (L')$ is isotopic to $L$. 
\end{proof}

Proposition~\ref{prop:reverse} can be also shown by the fact that the $\pi$-rotation of the cube $C$ around an axis orthogonal to two faces induces a self-homeomorphism of $S^{3} / Q_{8}$ isotopic to the identity.

\section{Examples}
\label{section:ex}

\subsection{Fibers of Seifert fibrations}
\label{subsection:fiber}
We begin with the simplest example. 
Suppose that $L$ is the knot in $S^{3} / Q_{8}$ shown in the upper left of Figure~\ref{fig: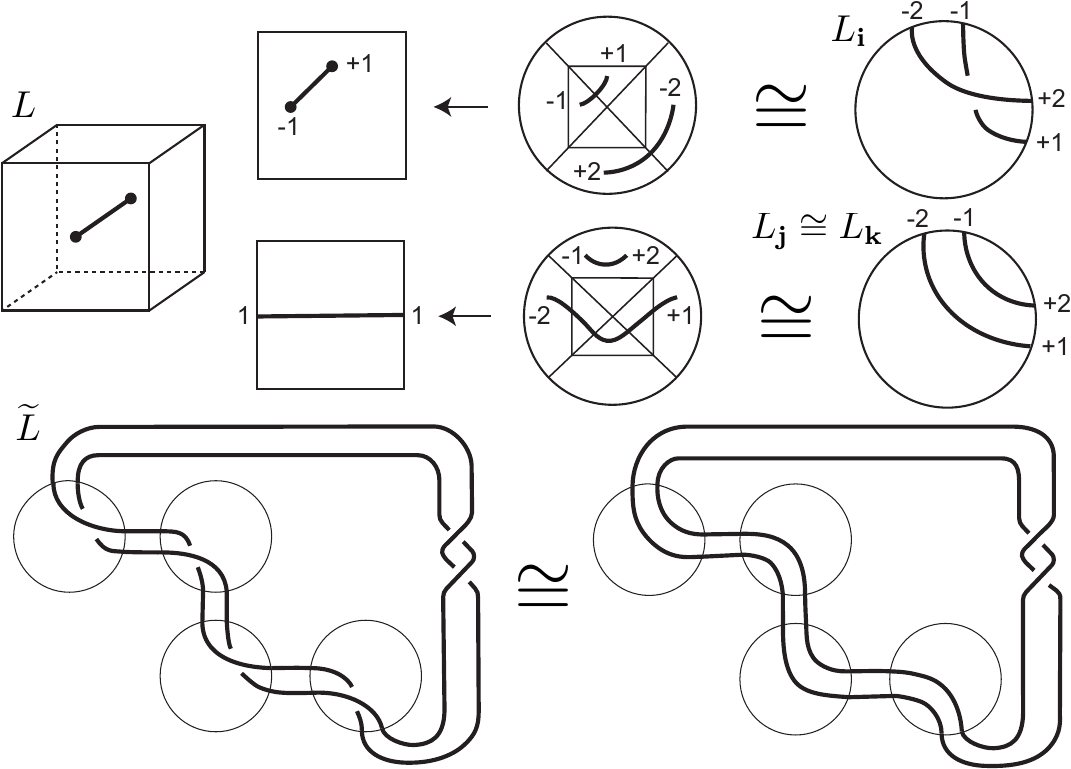}. 
The lifts $L_{\bfi}$ and $L_{\bfj}$ of $L$ in $L(4,1)$ are obtained as indicated in Figure~\ref{fig:hopf-lift.pdf}. 
They are not isotopic, since $L_{\bfi}$ has two components, but $L_{\bfj}$ has a single component. 
The lift $L_{\bfk}$ is isotopic to $L_{\bfj}$. 
The lift $\widetilde{L}$ in $S^{3}$ is the Hopf link. 
This is a reconstruction of an example in \cite{Manfredi14}. 
Moreover, $L$ is a fiber of a Seifert fibration on $S^{3} / Q_{8}$. 
To describe this more precisely, 
we classify the Seifert fibrations on $S^{3} / Q_{8}$. 

\fig[width=12cm]{hopf-lift.pdf}{Links in Example~\ref{subsection:fiber}}

\begin{prop}
\label{prop:seifert}
There are exactly four Seifert fibrations on the 3-manifold $S^{3} / Q_{8}$ up to isotopy. 
\end{prop}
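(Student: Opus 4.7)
The plan is to classify Seifert fibrations on $S^{3}/Q_{8}$ by the conjugacy class in $\pi_{1}(S^{3}/Q_{8}) = Q_{8}$ of a regular fiber. Since $Q_{8}$ is Hamiltonian, its cyclic subgroups are $\{1\}$, $\{\pm 1\}$, and the three order-four subgroups $\langle \bfi \rangle$, $\langle \bfj \rangle$, $\langle \bfk \rangle$; this gives four possible conjugacy classes for a non-trivial regular fiber, namely $[-1]$, $[\bfi]$, $[\bfj]$, and $[\bfk]$.

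First I would exhibit one Seifert fibration realizing each of these classes. For the class $[-1]$, the left multiplication action of $S^{1}_{u} = \{e^{u\theta}\}$ on $S^{3}$ (for any purely imaginary unit quaternion $u$) commutes with the right $Q_{8}$-action and therefore descends to an $S^{1}$-action on $S^{3}/Q_{8}$ whose orbits form a Seifert fibration with base orbifold $S^{2}(2,2,2)$ and three exceptional fibers of multiplicity $2$, whose regular fiber represents $-1$. For the class $[u]$ with $u \in \{\bfi, \bfj, \bfk\}$, the right multiplication action of $S^{1}_{u}$ on $S^{3}$ does not commute with the right $Q_{8}$-action, but the identity $g^{-1} u g \in \{\pm u\}$ for every $g \in Q_{8}$ implies that $Q_{8}$ permutes the right $S^{1}_{u}$-orbits; these orbits descend to the fibers of a Seifert fibration on $S^{3}/Q_{8}$ with base $\bbR \bbP^{2}$ whose regular fiber represents $u$, and a direct analysis of stabilizers shows this fibration is a genuine circle bundle with no exceptional fibers.

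Next I would verify that these four fibrations are pairwise non-isotopic and exhaust all Seifert fibrations up to isotopy. The unoriented conjugacy class of a regular fiber is an ambient-isotopy invariant, and the four classes $[-1]$, $[\bfi]$, $[\bfj]$, $[\bfk]$ in $Q_{8}$ are mutually distinct, so the four Seifert fibrations are pairwise non-isotopic; note that the last three are related by non-trivial elements of the mapping class group $\Out(Q_{8}) = \mathfrak{S}_{3}$, so they become equivalent under homeomorphism even though they remain distinct up to isotopy. Conversely, the regular fiber of any Seifert fibration on $S^{3}/Q_{8}$ generates a cyclic subgroup of $Q_{8}$ and is nontrivial (since $S^{3}/Q_{8}$ is neither $S^{3}$ nor $S^{2} \times S^{1}$), so its conjugacy class is one of the four above.

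The main obstacle is the uniqueness step: two Seifert fibrations with the same regular-fiber conjugacy class must be isotopic, not merely homeomorphic. For this I would appeal to the uniqueness theorem for Seifert fibrations of closed Seifert-fibered $3$-manifolds distinct from $S^{3}$ and $S^{2} \times S^{1}$ (due to Orlik and Waldhausen), which asserts that the isotopy class of such a fibration is determined by the conjugacy class of its regular fiber, thereby reducing the proposition to the enumeration carried out above.
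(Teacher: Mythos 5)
Your construction of the four fibrations is correct and attractive: the left-multiplication circle action descends to the fibration over $S^{2}(2,2,2)$ with regular fiber $[-1]$, and for each $u \in \{\bfi, \bfj, \bfk\}$ the right Hopf fibration along $u$ is normalized by $Q_{8}$ and descends to a circle bundle over $\bbR\bbP^{2}$ with fiber class $[u]$; together with the observation that the free homotopy class of a regular fiber is an isotopy invariant, this correctly produces at least four pairwise non-isotopic fibrations. The gap is in the final step, which you yourself flag as the main obstacle: there is no theorem of Orlik or Waldhausen asserting that, for a closed Seifert fibered space other than $S^{3}$ and $S^{2}\times S^{1}$, the isotopy class of a Seifert fibration is determined by the conjugacy class of its regular fiber. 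The classical Waldhausen/Orlik--Vogt--Zieschang uniqueness results classify Seifert fibrations only up to fiber-preserving homeomorphism, and their exceptional list (where even that uniqueness fails) explicitly contains the prism manifolds and the lens spaces --- i.e., precisely the manifold at hand and its covers. Isotopy-level uniqueness for these small Seifert fibered spaces is a genuinely harder statement (it is tied to mapping class group and Smale-conjecture type results), so the citation cannot carry the weight you place on it, and without it you have shown only that there are \emph{at least} four fibrations up to isotopy, not \emph{exactly} four. (Your parenthetical reason that the regular fiber is nontrivial in $\pi_{1}$ is also too quick, though the conclusion is salvageable: if the fiber were null-homotopic, $Q_{8}$ would be the orbifold fundamental group of a spherical $2$-orbifold, which it is not.)

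To close the gap you need the two inputs the paper actually uses: Hatcher's classification showing that $S^{3}/Q_{8}$ carries exactly two Seifert fibrations up to homeomorphism (the $S^{2}(2,2,2)$ one and the circle bundle over $\bbR\bbP^{2}$), and Price's theorem that $\pi_{0}(\Aut(S^{3}/Q_{8})) \cong \Out(Q_{8}) \cong \mathfrak{S}_{3}$. Given these, an orbit--stabilizer count finishes the proof: every self-homeomorphism permutes $[\bfi], [\bfj], [\bfk]$ and so preserves the $S^{2}(2,2,2)$ fibration up to isotopy (its three exceptional fibers realize exactly these classes and the permutations are realized by symmetries of the cube), giving one isotopy class; while the stabilizer of the circle bundle with fiber class $[\bfi]$ is the transposition exchanging $[\bfj]$ and $[\bfk]$, of index $3$, giving three isotopy classes. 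Your explicit quaternionic models would slot in nicely as replacements for the paper's Klein-bottle picture, but they do not by themselves rule out further fibrations.
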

\begin{proof}
It is known that a prism 3-manifold admits two Seifert fibrations up to homeomorphism 
(see \cite[Theorem 2.3]{Hatcher07}). 
More precisely,  the 3-manifold $S^{3} / Q_{8}$ admits the two Seifert fibrations 
that are $M(0, 0; 1/2, -1/2, 1/2)$ and $M(-1, 0; 2/1)$ in the notation of \cite{Hatcher07}. 
The former has three $(2,1)$-singular fibers, 
and its base orbifold is the sphere with three cone points of order two. 
The latter has no singular fibers, 
and it is a $S^{1}$-bundle over $\bbR \bbP^{2}$. 
The 3-manifold $S^{3} / Q_{8}$ can be decomposed into the twisted $I$-bundle $S^{1} \widetilde{\times} S^{1} \widetilde{\times} I$ over the Klein bottle and the solid torus. 
The two Seifert fibrations on $S^{3} / Q_{8}$ are obtained by extending those on $S^{1} \widetilde{\times} S^{1} \widetilde{\times} I$. 

We consider how the self-homeomorphisms of $S^{3} / Q_{8}$ act on the Seifert fibrations. 
Recall that $\pi_{0} (\Aut(S^{3} / Q_{8})) \cong \Out(Q_{8}) \cong \mathfrak{S}_{3}$. 
A Klein bottle $\Sigma$ is embedded in $S^{3} / Q_{8}$ as shown in Figure~\ref{fig: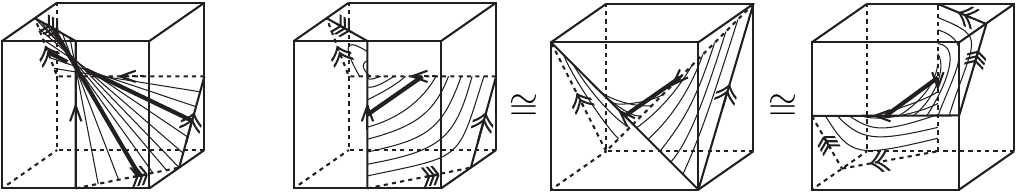}. 
A tubular neighborhood $N(\Sigma)$ of $\Sigma$ is homeomorphic to $S^{1} \widetilde{\times} S^{1} \widetilde{\times} I$, and the remaining part is homeomorphic to the solid torus, 
whose core represents the free homotopy class $[\bfi]$. 

\fig[width=12cm]{kb_fiber.pdf}{A Klein bottle embedded in $S^{3} / Q_{8}$ and fibers}

The left of Figure~\ref{fig:kb_fiber.pdf} indicates a Seifert fibration on $N(\Sigma)$ with two singular fibers. 
The Seifert fibration on $S^{3} / Q_{8}$ obtained by extending this has three singular fibers 
representing the free homotopy classes $[\bfi]$, $[\bfj]$, and $[\bfk]$. 
This Seifert fibration has a symmetry that realizes each permutation of the singular fibers. 
Since each element of $\Out(Q_{8})$ is determined by the corresponding permutation of the classes $[\bfi]$, $[\bfj]$, and $[\bfk]$, 
each self-homeomorphism of $S^{3} / Q_{8}$ preserves this Seifert fibration up to isotopy. 

The right of Figure~\ref{fig:kb_fiber.pdf} indicates a Seifert fibration on $N(\Sigma)$ with no singular fibers. 
The Seifert fibration on $S^{3} / Q_{8}$ obtained by extending this is a $S^{1}$-bundle 
in which each fiber represents the free homotopy class $[\bfi]$. 
The $\pi/2$-rotation of $C$ around the axis $\bfi$ corresponds to the transposition of $[\bfj]$ and $[\bfk]$. 
This self-homeomorphism preserves the isotopy class of $\Sigma$ as shown in Figure~\ref{fig:kb_fiber.pdf}, 
and so it also preserves this Seifert fibration up to isotopy. 
Hence there are three Seifert fibrations on $S^{3} / Q_{8}$ with no singular fibers, 
determined by the free homotopy class $[\bfi]$, $[\bfj]$, or $[\bfk]$ represented by a fiber. 
Therefore there are four Seifert fibrations on $S^{3} / Q_{8}$ in total. 
\end{proof}

We denote the four Seifert fibrations on $S^{3} / Q_{8}$ by $\mathcal{F}_{-1}$, $\mathcal{F}_{\bfi}$, $\mathcal{F}_{\bfj}$, and $\mathcal{F}_{\bfk}$ 
based on the free homotopy class of a regular fiber. 
The knot $L$ in the first example can be represented by a singular fiber in $\mathcal{F}_{-1}$ or a fiber of $\mathcal{F}_{\bfi}$. 
Note that the lift of any Seifert fibration to a finite cover is also a Seifert fibration. 

The lift of $\mathcal{F}_{-1}$ to $X_{\bfi}$ is a Seifert fibration on $L(4,1)$ with two singular fibers, 
which are the lift $L_{\bfi}$. 
The lift of $\mathcal{F}_{-1}$ to $X_{\bfj}$ is the same Seifert fibration on $L(4,1)$, 
but the lift $L_{\bfj}$ is a regular fiber. 
The case for $X_{\bfk}$ is similar. 
According to the classification of the Seifert fibrations on $S^{3}$ \cite[Theorem 2.3]{Hatcher07}, 
a Seifert fibration on $S^{3}$ without singular fibers is the positive or negative Hopf fibration. 
A Hopf fibration is positive if and only if the linking number of its two fibers with coherent orientations is equal to $+1$. 
In the lift of $\mathcal{F}_{-1}$ to $X_{\bfi}$, 
the lifts of the singular fibers of $[\bfj]$ and $[\bfk]$ are regular fibers. 
In the lift $\widetilde{\mathcal{F}}_{-1}$ of $\mathcal{F}_{-1}$ to $S^{3}$, 
the lifts of all the singular fibers are regular fibers by the symmetry. 
Since $\widetilde{\mathcal{F}}_{-1}$ has no singular fibers, 
it is a Hopf fibration. 
The lift $\widetilde{L}$ in $S^{3}$ consists of two fibers of $\widetilde{\mathcal{F}}_{-1}$, 
and so it is the Hopf link. 
If we choose an orientation on the knot $L$ in Figure~\ref{fig:hopf-lift.pdf}, 
The orientation on the lift $\widetilde{L}$ is determined. 
Then the linking number of the two components of $\widetilde{L}$ is equal to $+1$. 
Hence $\widetilde{\mathcal{F}}_{-1}$ is the positive Hopf fibration. 
Note that the three lifts of a regular fiber of $\mathcal{F}_{-1}$ into $L(4,1)$ are isotopic. 
Since each self-homeomorphism of $S^{3} / Q_{8}$ preserves $\mathcal{F}_{-1}$ up to isotopy, 
the six diagrams of a regular fiber of $\mathcal{F}_{-1}$ depending on the choice of directions of $\bfi$, $\bfj$, and $\bfk$ are equivalent. 

The lift of $\mathcal{F}_{\bfi}$ to $X_{\bfi}$ is a $S^{1}$-bundle over $S^{2}$, 
and the lift $L_{\bfi}$ consists of two fibers. 
The lift of $\mathcal{F}_{\bfi}$ to $X_{\bfj}$ is a $S^{1}$-bundle over $\bbR \bbP^{2}$, 
and the lift $L_{\bfj}$ is a fiber. 
Since the lift $\widetilde{\mathcal{F}}_{\bfi}$ of $\mathcal{F}_{\bfi}$ to $S^{3}$ has no singular fibers 
and the lift $\widetilde{L}$ with a coherent orientation is the positive Hopf link, 
the lift $\widetilde{\mathcal{F}}_{\bfi}$ is also the positive Hopf fibration.

We can construct satellites of the knot $L$. 
We define the satellite construction in a general setting as follows. 
Let $K$ be an $n$-component link in a 3-manifold $M$. 
Let $K_{i}$ for $i = 1, \dots, n$ denote the components of $K$. 
Fix a close regular neighborhood $N(K) = N(K_{1}) \cup \dots \cup N(K_{n})$ of $K$. 
The meridian of each $K_{i}$ is a simple closed curve on $\partial N(K_{i})$ 
which bounds a disk in $N(K_{i})$. 
Fix an oriented longitude of each $K_{i}$ which is a simple closed curve on $\partial N(K_{i})$ 
and transversely intersects the meridian at a single point. 
Let $P = (P_{1}, \dots, P_{n})$ be an $n$-tuple of links in the solid torus $S^{1} \times D^{2}$. 
Suppose that each $P_{i}$ is not contained in a 3-ball. 
Then the \emph{satellite} $K(P)$ of $K$ with the pattern $P$ is 
a link in $M$ obtained as the image of $P_{1} \cup \dots \cup P_{n}$ 
by replacing each $N(K_{i})$ with $S^{1} \times D^{2}$ 
so that the oriented longitude $S^{1} \times \{ z \} \subset \partial (S^{1} \times D^{2})$ for a point $z \in \partial D^{2}$ is mapped to the oriented longitude of $K_{i}$. 
For the knot $L$ in Figure~\ref{fig:hopf-lift.pdf} and a pattern $P$, 
the lifts $L(P)_{\bfi}$ and $L(P)_{\bfj}$ in $L(4,1)$ of the satellite link $L(P)$ in $S^{3} / Q_{8}$ may be non-isotopic. 
Their lift $\widetilde{L(P)} = \widetilde{L}(\widetilde{P})$ in $S^{3}$ is the satellite of the Hopf link $\widetilde{L}$ with the pattern $\widetilde{P}$ that consists of two copies of the lift of $P$ in the double cover of $S^{1} \times D^{2}$. 
This reconstructs other examples in \cite{Manfredi14}. 
Since the Hopf link and its satellites are not hyperbolic, 
all of the links in Example~\ref{subsection:fiber} are not hyperbolic.

\subsection{Hyperbolic links}
\label{subsection:hyp}

We give an example of a hyperbolic knot in $S^{3} / Q_{8}$, which looks the simplest. 
Suppose that $L$ is the knot in $S^{3} / Q_{8}$ shown in Figure~\ref{fig: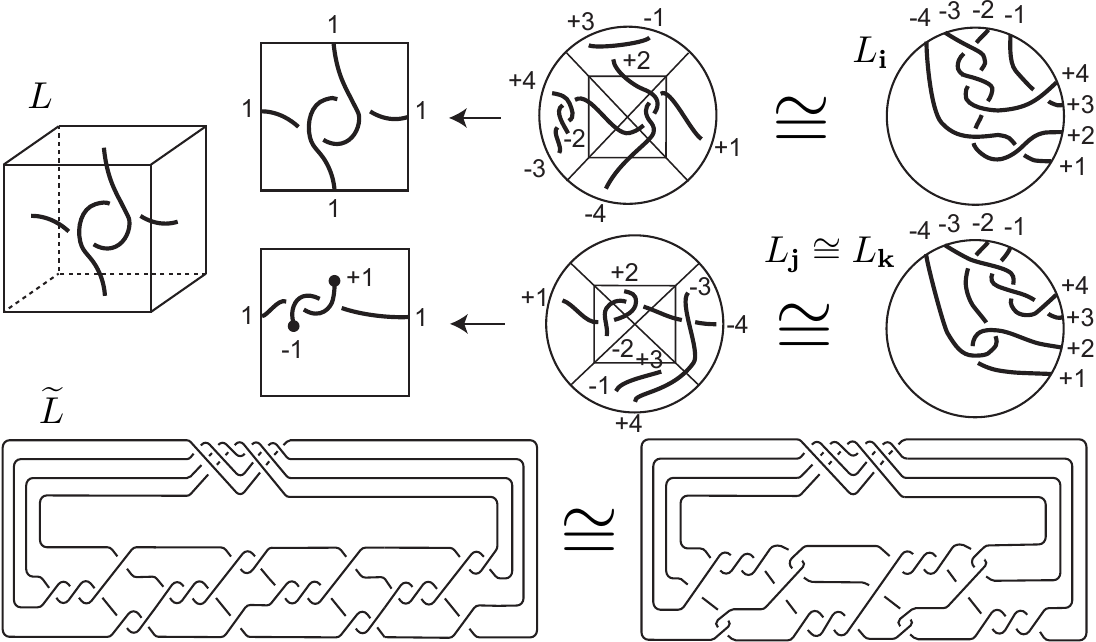}. 
The lifts $L_{\bfi}$ and $L_{\bfj}$ in $L(4,1)$ and the lift $\widetilde{L}$ in $S^{3}$ are obtained as indicated in Figure~\ref{fig:hyp-lift.pdf}. 
The lift $L_{\bfj}$ is the same as the example in Figure~\ref{fig:lift_square.pdf}, but it is relabeled first. 
The lifts $L_{\bfi}$ and $L_{\bfj}$ are not isotopic, since $L_{\bfi}$ has two components, but $L_{\bfj}$ has a single component. 
The lift $L_{\bfk}$ is isotopic to $L_{\bfj}$. 
The link $\widetilde{L}$ in $S^{3}$ is shown by two diagrams in Figure~\ref{fig:hyp-lift.pdf}. 
Although not obvious, they are isotopic. 

\fig[width=12cm]{hyp-lift.pdf}{Links in Example~\ref{subsection:hyp}}

We show that the complement $M = (S^{3} / Q_{8}) \setminus L$ is s119 in the census \cite{CHW99} of cusped finite-volume hyperbolic 3-manifolds. 
The manifold $M$ can be decomposed into a certain ideal polyhedron $P$ with eight faces as indicated in Figure~\ref{fig: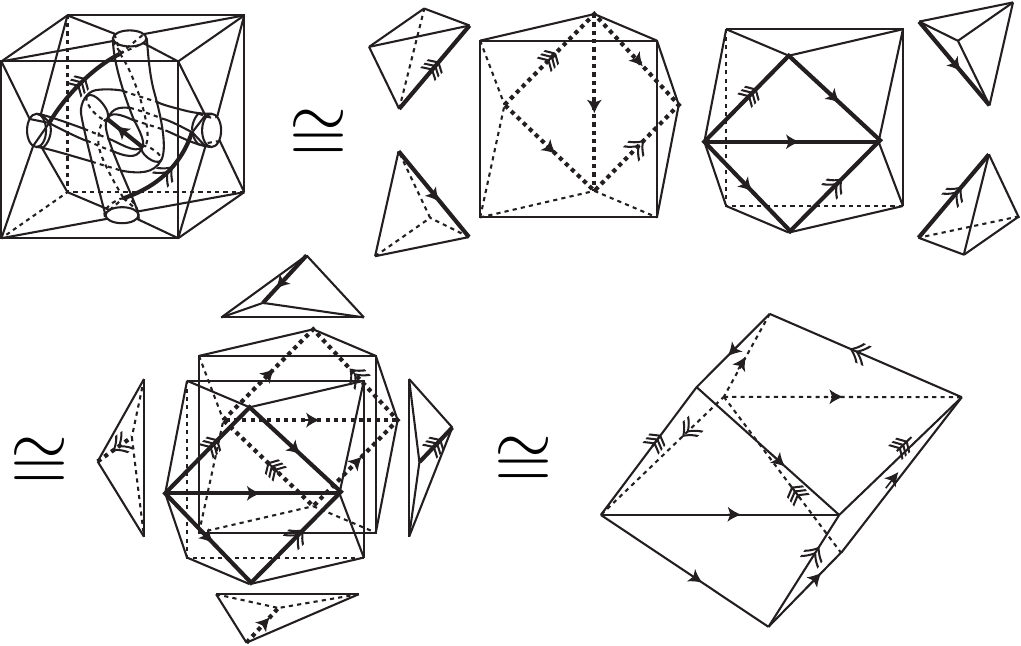}. 
The manifold $M$ is obtained from $P$ by gluing the mutually opposite faces so that the arrows are matched. 
The link $L$ is preserved by a $\bbZ / 2 \bbZ \times \bbZ / 2 \bbZ$-action on $S^{3} / Q_{8}$ 
whose non-trivial elements are induced by the three $\pi$-rotations of the cube $C$ preserving the diagram of $L$. 
This induces a $\bbZ / 2 \bbZ \times \bbZ / 2 \bbZ$-symmetry of the manifold $M$. 
Note that the isometry group of the hyperbolic 3-manifold s119 is isomorphic to $\bbZ / 2 \bbZ \times \bbZ / 2 \bbZ$ according to SnapPy \cite{SnapPy}. 
The decomposition of $M$ into $P$ is compatible with this $\bbZ / 2 \bbZ \times \bbZ / 2 \bbZ$-symmetry. 
The polyhedron $P$ can be decomposed into six tetrahedra compatibly with the gluing, 
but the symmetry is broken. 
It may be possible to calculate the explicit parameters of ideal tetrahedra based on the way of gluing, 
and identify $M$. 
However, we check the identification more easily. 

The double cover $M_{\bfi} = L(4,1) \setminus L_{\bfi}$ of $M$ is homeomorphic to the complement of a link in $S^{3}$ as indicated in Figure~\ref{fig: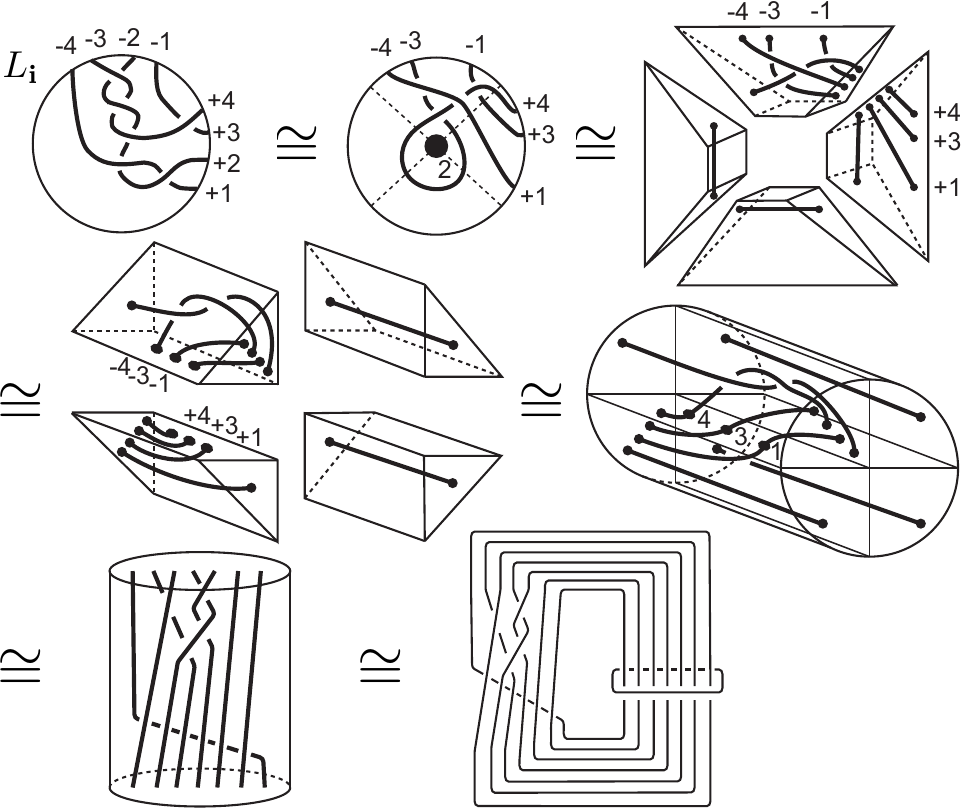}. 
In the second diagram, the complement of the core (denoted by 2) in the lens-shaped region is decomposed into four prisms. 
The equivalences between the second and fifth diagrams are obtained by regluing the four prisms. 
In the fifth diagram, the opposite disks are glued using the counterclockwise $\pi /2$-rotation. 
According to the census \cite{FGGTV16} of hyperbolic 3-manifolds which can be decomposed into regular ideal tetrahedra (called \emph{tetrahedral manifolds}), 
the manifold $M_{\bfi}$ is otet$08_{0005}$ 
(see Figure 3 in \cite{FGGTV16}), also named t12844 in the census \cite{CHW99}. 
The fact that the complement of the link shown in the last diagram is otet$08_{0005}$ 
can be confirmed by SnapPy \cite{SnapPy}. 
The manifold $M_{\bfi}$ can be decomposed into eight regular ideal tetrahedra. 
Any tetrahedral manifold is commensurable to the figure-eight knot complement m004 
and the Bianchi orbifold $\bbH^{3} / \mathrm{PSL}(2, \bbZ [(1+\sqrt{-3})/2])$, 
and so it is arithmetic (see \cite[Section 5]{FGGTV16}). 
Hence $M_{\bfi}$ and $M$ are arithmetic hyperbolic 3-manifolds. 
Since $\vol(M_{\bfi}) = 8 v_{\mathrm{tet}}$, 
we have $\vol(M) = 4 v_{\mathrm{tet}}$, where $v_{\mathrm{tet}} = 1.0149...$ is the volume of a regular ideal tetrahedron. 
Baker and Reid \cite[Table 1]{BR02} computed which finite groups are the fundamental groups of Dehn fillings of 1-cusped arithmetic hyperbolic 3-manifolds decomposed into at most 7 tetrahedra. 
Among them, only the manifold s119 has $S^{3} / Q_{8}$ as a Dehn filling. 
Hence $M$ is s119. 
Note that $M$ cannot be decomposed into regular ideal tetrahedra, 
since any ideal triangulation of s119 needs at least six ideal tetrahedra 
according to the census \cite{CHW99}. 
The link $\widetilde{L}$ in $S^{3}$ and its complement are not in any census at present 
because they are too large. 

\fig[width=12cm]{s119.pdf}{A polyhedral decomposition of the manifold s119}

\fig[width=12cm]{otet8_5.pdf}{Links whose complements are the manifold otet$08_{0005}$}

Remark that the knot with the converse crossings is not hyperbolic. 
Figure~\ref{fig: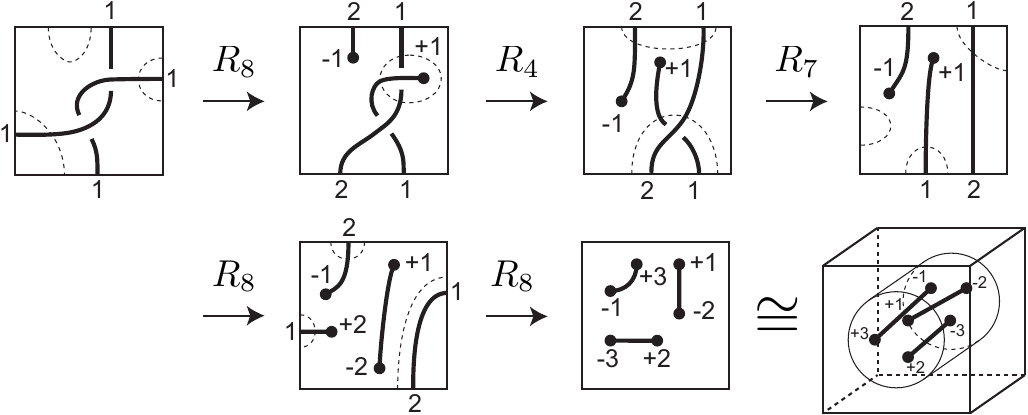} indicates moves for this knot 
and a satellite construction from the knot in Example~\ref{subsection:fiber}. 

\fig[width=12cm]{converse-cross.pdf}{Moves for a knot in $S^{3} / Q_{8}$}

\subsection{Links of the same number of components}
\label{subsection:2-comp}

Finally, we give an example of a hyperbolic link in $S^{3} / Q_{8}$ 
whose three lifts in $L(4,1)$ are not isotopic and have the same number of components. 
Suppose that $L$ is the knot in $S^{3} / Q_{8}$ shown in Figure~\ref{fig: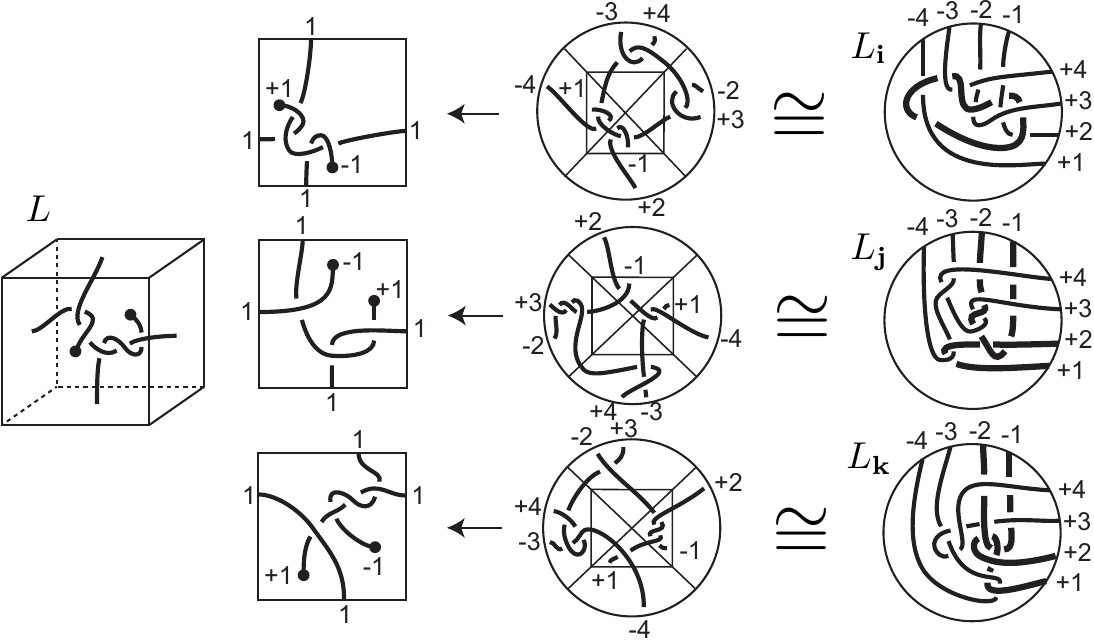}. 
The lifts $L_{\bfi}$, $L_{\bfj}$, and $L_{\bfk}$ in $L(4,1)$ are obtained as indicated in Figure~\ref{fig:hyp-lift_nh.pdf}. 
(We omit diagrams of the lift $\widetilde{L}$ in $S^{3}$.) 
The links $L_{\bfi}$, $L_{\bfj}$, and $L_{\bfk}$ have two components. 
Consider the lift in $S^{3}$ of a single component of each of $L_{\bfi}$, $L_{\bfj}$, and $L_{\bfk}$. 
Note that the lift in $S^{3}$ does not depend on the choice of a component. 
They are the 4-component links shown in Figure~\ref{fig: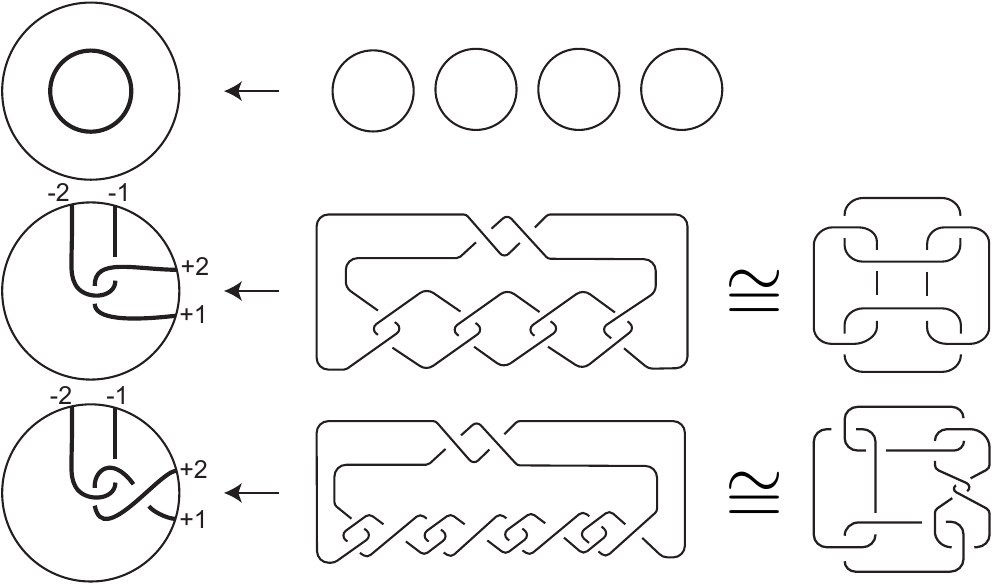}, which are distinguished by splitness and hyperbolicity. 
Hence $L_{\bfi}$, $L_{\bfj}$, and $L_{\bfk}$ are not isotopic. 
The hyperbolicity of $\widetilde{L}$, which implies that of $L$, is verified by SnapPy \cite{SnapPy}. 

\fig[width=12cm]{hyp-lift_nh.pdf}{Links in Example~\ref{subsection:2-comp}}

\fig[width=12cm]{4-chain.pdf}{The lifts in $S^{3}$ of components of $L_{\bfi}$, $L_{\bfj}$, and $L_{\bfk}$}

\section*{Acknowledgements} 
The author is grateful to Yuya Koda, Yuka Kotorii, Sonia Mahmoudi, Elisabetta Matsumoto, and Yuta Nozaki for their helpful discussions. 
This work is supported by the World Premier International Research Center Initiative Program, International Institute for Sustainability with Knotted Chiral Meta Matter (WPI-SKCM$^2$), MEXT, Japan.

\bibliographystyle{plain}
\bibliography{ref-lqg}

\begin{thebibliography}{10}

\bibitem{Adams85}
Colin~C. Adams.
\newblock Thrice-punctured spheres in hyperbolic 3-manifolds.
\newblock {\em Trans. Amer. Math. Soc.}, 287(2):645--656, 1985.

\bibitem{Adams21}
Colin~C. Adams, Michele Capovilla-Searle, Darin Li, Lily~Qiao Li, Jacob
  McErlean, Alexander Simons, Natalie Stewart, and Xiwen Wang.
\newblock Lower bounds on volumes of hyperbolic 3-manifolds via decomposition.
\newblock {\em arXiv preprint arXiv:2111.06319}, 2021.

\bibitem{AST07}
Ian Agol, Peter~A. Storm, and William~P. Thurston.
\newblock Lower bounds on volumes of hyperbolic {H}aken 3-manifolds.
\newblock {\em J. Amer. Math. Soc.}, 20(4):1053--1077, 2007.

\bibitem{AEM25}
Toky Andriamanalina, Myfanwy~E. Evans, and Sonia Mahmoudi.
\newblock Diagrammatic representations of 3-periodic entanglements.
\newblock {\em Topology Appl.}, 368:109346, 2025.

\bibitem{BGH08}
Kenneth~L. Baker, J.~Elisenda Grigsby, and Matthew Hedden.
\newblock Grid diagrams for lens spaces and combinatorial knot {F}loer
  homology.
\newblock {\em Int. Math. Res. Not.}, 2008:rnn024, 2008.

\bibitem{BR02}
Mark~D. Baker and Alan~W. Reid.
\newblock Arithmetic knots in closed 3-manifolds.
\newblock {\em J. Knot Theory Ramifications}, 11(6):903--920, 2002.

\bibitem{BF87}
Michel Boileau and Erica Flapan.
\newblock Uniqueness of free actions on {$S^{3}$} respecting a knot.
\newblock {\em Canad. J. Math.}, 39(4):969--982, 1987.

\bibitem{Bourgoin08}
Mario~O. Bourgoin.
\newblock Twisted link theory.
\newblock {\em Algebr. Geom. Topol.}, 8(3):1249--1279, 2008.

\bibitem{BR23}
Keegan Boyle and Nicholas Rouse.
\newblock Obstructions to free periodicity and symmetric {L}-space knots.
\newblock {\em arXiv preprint arXiv:2310.01705}, 2023.

\bibitem{BRW23}
Keegan Boyle, Nicholas Rouse, and Ben Williams.
\newblock A classification of symmetries of knots.
\newblock {\em arXiv preprint arXiv:2306.04812}, 2023.

\bibitem{CFW10}
Danny Calegari, Michael Freedman, and Kevin Walker.
\newblock Positivity of the universal pairing in 3 dimensions.
\newblock {\em J. Amer. Math. Soc.}, 23(1):107--188, 2010.

\bibitem{CHW99}
Patrick Callahan, Martin Hildebrand, and Jeffrey~R. Weeks.
\newblock A census of cusped hyperbolic 3-manifolds.
\newblock {\em Math. Comp.}, 68(225):321--332, 1999.

\bibitem{Carrega17}
Alessio Carrega.
\newblock Nine generators of the skein space of the 3--torus.
\newblock {\em Algebr. Geom. Topol.}, 17(6):3449--3460, 2017.

\bibitem{CM18}
Alessia Cattabriga and Enrico Manfredi.
\newblock Diffeomorphic vs isotopic links in lens spaces.
\newblock {\em Mediterr. J. Math.}, 15(4):172, 2018.

\bibitem{CMM13}
Alessia Cattabriga, Enrico Manfredi, and Michele Mulazzani.
\newblock On knots and links in lens spaces.
\newblock {\em Topology Appl.}, 160(2):430--442, 2013.

\bibitem{Chbili03}
Nafaa Chbili.
\newblock A new criterion for knots with free periods.
\newblock {\em Ann. Fac. Sci. Toulouse Math.}, 12(4):465--477, 2003.

\bibitem{SnapPy}
Marc Culler, Nathan~M. Dunfield, and Jeffrey~R. Weeks.
\newblock Snap{P}y, a computer program for studying the geometry and topology
  of 3-manifolds, 1991--2023.
\newblock \url{https://snappy.computop.org/}.

\bibitem{Drobotukhina90}
Yulia~V. Drobotukhina.
\newblock An analogue of the {J}ones polynomial for links in
  $\mathbb{R}{P}^{3}$ and a generalization of the {K}auffman--{M}urasugi
  theorem.
\newblock {\em Algebra i Analiz}, 2(3):171--191, 1990.

\bibitem{FGGTV16}
Evgeny Fominykh, Stavros Garoufalidis, Matthias Goerner, Vladimir Tarkaev, and
  Andrei Vesnin.
\newblock A census of tetrahedral hyperbolic manifolds.
\newblock {\em Exp. Math.}, 25(4):466--481, 2016.

\bibitem{FKP08}
David Futer, Efstratia Kalfagianni, and Jessica~S. Purcell.
\newblock Dehn filling, volume, and the {J}ones polynomial.
\newblock {\em J. Differential Geom.}, 78(3):429--464, 2008.

\bibitem{GM16}
Bo\v{s}tjan Gabrov\v{s}ek and Enrico Manfredi.
\newblock On the {S}eifert fibered space link group.
\newblock {\em Topology Appl.}, 206:255--275, 2016.

\bibitem{GM17}
Bo\v{s}tjan Gabrov\v{s}ek and Maciej Mroczkowski.
\newblock Link diagrams in {S}eifert manifolds and applications to skein
  modules.
\newblock In {\em Algebraic Modeling of Topological and Computational
  Structures and Applications}, volume 219 of {\em Springer Proceedings in
  Mathematics \& Statistics}, pages 117--141. Springer, 2017.

\bibitem{GMO07}
Sergei~A. Grishanov, Vadim~R. Meshkov, and Alexander~V. Omel'chenko.
\newblock Kauffman-type polynomial invariants for doubly periodic structures.
\newblock {\em J. Knot Theory Ramifications}, 16(6):779--788, 2007.

\bibitem{Hartley81}
Richard Hartley.
\newblock Knots with free period.
\newblock {\em Canad. J. Math.}, 33(1):91--102, 1981.

\bibitem{Hatcher07}
Allen Hatcher.
\newblock Notes on basic 3-manifold topology, 2007.
\newblock \url{https://pi.math.cornell.edu/~hatcher/3M/3Mdownloads.html}.

\bibitem{Kauffman99}
Louis~H. Kauffman.
\newblock Virtual knot theory.
\newblock {\em European J. Combin.}, 20(7):663--691, 1999.

\bibitem{Kojima88}
Sadayoshi Kojima.
\newblock Isometry transformations of hyperbolic 3-manifolds.
\newblock {\em Topology Appl.}, 29(3):297--307, 1988.

\bibitem{KMMY25}
Yuka Kotorii, Sonia Mahmoudi, Elisabetta Matsumoto, and Ken'ichi Yoshida.
\newblock On the isotopies of tangles in periodic 3-manifolds using finite
  covers.
\newblock {\em arXiv preprint arXiv:2505.20940}, 2025.

\bibitem{Manfredi14}
Enrico Manfredi.
\newblock Lift in the 3-sphere of knots and links in lens spaces.
\newblock {\em J. Knot Theory Ramifications}, 23(5):1450022, 2014.

\bibitem{MD09}
Maciej Mroczkowski and Mieczyslaw~K. Dabkowski.
\newblock {KBSM} of the product of a disk with two holes and {$S^{1}$}.
\newblock {\em Topology Appl.}, 156(10):1831--1849, 2009.

\bibitem{Myers93}
Robert Myers.
\newblock Excellent 1-manifolds in compact 3-manifolds.
\newblock {\em Topology Appl.}, 49(2):115--127, 1993.

\bibitem{Nozaki18}
Yuta Nozaki.
\newblock An explicit relation between knot groups in lens spaces and those in
  {$S^{3}$}.
\newblock {\em J. Knot Theory Ramifications}, 27(8):1850045, 2018.

\bibitem{Price77}
Thomas~M. Price.
\newblock Homeomorphisms of quaternion space and projective planes in four
  space.
\newblock {\em J. Austral. Math. Soc.}, 23(1):112--128, 1977.

\bibitem{Sakuma86}
Makoto Sakuma.
\newblock Uniqueness of symmetries of knots.
\newblock {\em Math. Z.}, 192:225--242, 1986.

\bibitem{SZ25}
John~M. Sullivan and Max Zahoransky~von Worlik.
\newblock Classification of small links in the unmarked solid torus.
\newblock {\em J. Knot Theory Ramifications}, 34(9):2550032, 2025.

\bibitem{Thurston78}
William~P. Thurston.
\newblock {\em The Geometry and Topology of Three-Manifolds}.
\newblock Lecture Notes from Princeton University, 1978--80.
\newblock available at \url{http://library.msri.org/books/gt3m/}.

\bibitem{Thurston82}
William~P. Thurston.
\newblock Three dimensional manifolds, {K}leinian groups and hyperbolic
  geometry.
\newblock {\em Bull. Amer. math. Soc.}, 6(3):357--381, 1982.

\bibitem{Vuong23}
Bao Vuong.
\newblock Fundamental group and twisted {A}lexander polynomial of link
  complement in 3-torus.
\newblock {\em arXiv preprint arXiv:2302.10461}, 2023.

\bibitem{Yoshida25}
Ken'ichi Yoshida.
\newblock Uniqueness of free 2-periodicities of links.
\newblock {\em arXiv preprint arXiv:2505.22282}, 2025.

\end{thebibliography}

\end{document}